\documentclass[a4paper,11pt]{article}
\usepackage{mathtools}
\mathtoolsset{showonlyrefs}

\usepackage[all,cmtip]{xy}
\usepackage{a4wide}
\usepackage{amsmath, amsthm, amssymb}
\usepackage{fontenc}
\usepackage[latin1]{inputenc}
\usepackage{graphicx}
\usepackage{pgfplots}
\usepackage[hang]{subfigure}
\usepackage{multirow}
\usepackage{array}
\usepackage{enumerate}
\usepackage{algorithmic, algorithm}

\usepackage{etoolbox}
\makeatletter
\patchcmd{\@algocf@start}{%
  \begin{lrbox}{\algocf@algobox}%
}{%
  \rule{0.1\textwidth}{\z@}%
  \begin{lrbox}{\algocf@algobox}%
  \begin{minipage}{0.8\textwidth}%
}{}{}
\patchcmd{\@algocf@finish}{%
  \end{lrbox}%
}{%
  \end{minipage}%
  \end{lrbox}%
}{}{}
\makeatother




\newtheorem{theorem}{Theorem}[section]

\newtheorem{proposition}[theorem]{Proposition}
\newtheorem{lemma}[theorem]{Lemma}
\newtheorem{remark}[theorem]{Remark}
\newtheorem{example}[theorem]{Example}
\newtheorem{corollary}[theorem]{Corollary}

\newcommand{\R}{\mathbb{R}}

\renewcommand{\atop}[2]{\genfrac{}{}{0pt}{}{#1}{#2}}

\def\tT{{\mbox{\tiny{T}}}}
\def\argmin{\mathop{\rm argmin}}

\def\lev{\mathrm{lev}\,}
\def\ker{\mathrm{ker}}
\def\supp{\mathrm{supp}}

\parindent0mm

\title{Disparity and Optical Flow Partitioning 
\\
Using Extended Potts Priors}
\author{Xiaohao Cai\thanks{University of Kaiserslautern, Department of Mathematics, Paul-Ehrlich-Str.\ 31, 67663 Kaiserslautern, Germany.
cai@mathematik.uni-kl.de, fitschen@mathematik.uni-kl.de,
and steidl@mathematik.uni-kl.de.}, \and
Jan Henrik Fitschen\footnotemark[1], \and
Mila Nikolova\thanks{CMLA, ENS Cachan, CNRS,  61 Avenue du President Wilson, F-94230 Cachan, France.
nikolova@cmla.ens-cachan.fr}, \and
Gabriele Steidl\footnotemark[1], \and
Martin Storath
\thanks{
Biomedical Imaging Group
EPFL,
CH-1015 Lausanne VD
Switzerland.
martin.storath@epfl.ch}
}
\date{}

\begin{document}

\maketitle

\begin{abstract}
This paper addresses the problems of disparity and optical flow partitioning
based on the brightness invariance assumption.
We investigate new variational approaches to these problems with Potts priors and possibly box constraints.
For the optical flow partitioning, our model includes vector-valued data and an adapted Potts regularizer.
Using the notation of asymptotically level stable functions we prove the existence of
global minimizers of our functionals.
We propose a modified alternating direction method of minimizers.
This iterative algorithm requires the computation of global minimizers of classical
univariate Potts problems which can be done efficiently by dynamic programming.
We prove that the algorithm converges both for the constrained and unconstrained problems.
Numerical examples demonstrate the very good performance of our partitioning method.
\end{abstract}

\section{Introduction}\label{sec:introduction}
An important task in computer vision is the reconstruction of three dimensional (3D)
scenes from stereo images.
Taking a photo, 3D objects are projected
onto a 2D image and the depth information gets lost.
If a stereo camera is used, two images are obtained. Due to the different perspectives there is a
displacement between corresponding points in the images
which depends on the distance of the points from the camera.
This displacement is called disparity and turns out to be inversely proportional to
the  distances of the objects, see Fig. \ref {fig:disp_ex}
for an illustration.
Therefore {\it disparity estimation} has constituted an active research area in recent years.
Global combinatorial optimization methods such as graph-cuts \cite{BVZ01,KZ01}
which rely on a discrete label space of the disparity map
and belief propagation \cite{KSK06,YWYWLN06} were developed as well as variational approaches
\cite{CEFPP12,CTKC11,DKA96,HPP12,MPP06,MPP09,WY11,WUPB12}.
In particular, in \cite{HPP12} the global energy function was also made convex by quantizing the disparity map
and converting it into a set of binary fields.
Illumination variations were additionally taken into account, e.g., in \cite{CEFPP12,CRH95}.
A stereo matching algorithm based on the curvelet decomposition was developed in \cite{MWW10}.
With the aim of reducing the computational redundancy, a histogram based
disparity estimation method was proposed in \cite{MLD11}.
Further, methods based on non-parametric local transforms
followed by normalized cross correlation (NCC) \cite{TLC03} and rank-transforms \cite{ZW94}
have been used.
In this paper we are interested in the direct {\it disparity partitioning} without a preliminary separate estimation of the disparity.
Moreover we want to avoid an initial quantization of the disparity map as necessary in graph-cut methods or
in \cite{HPP12}.
We focus on a variational approach with a linearized brightness invariance assumption
to constitute the data fidelity term. 
The Potts prior described below will serve as regularizing term  which forces the minimizer of our functional to show a
good partitioning.

%
\begin{figure}
\centering
	\includegraphics[width=0.3 \textwidth]{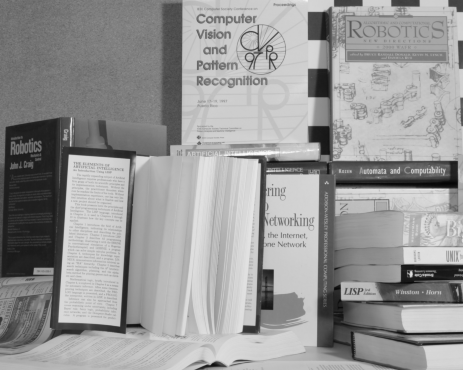}
	\hspace{0.5cm}
	\includegraphics[width=0.3 \textwidth]{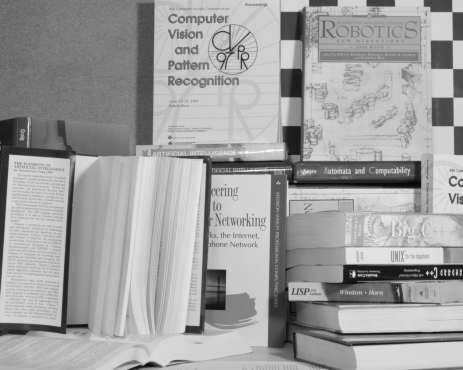}
	\hspace{0.5cm}
	\includegraphics[width=0.3 \textwidth]{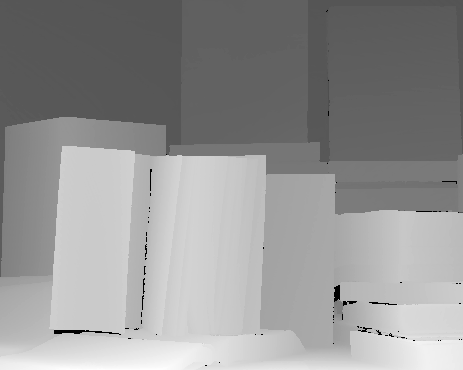}
\caption{\label{fig:disp_ex}
Left and middle:
Two images taken by a stereo camera.
The shift between the images is clearly visible.
Right:
True disparity encoded by different gray values which shows the depth of the different objects
in the scene.
(http://vision.middlebury.edu/stereo/ image credits notice)
}
\end{figure}
%

{\it Optical flow estimation} is closely related to disparity estimation where the horizontal displacement direction
has to be completed by the vertical one. In other words, we are searching for vector fields now and have to deal with vector-valued data.
Variational approaches to optical flow estimation were pioneered by Horn and Schunck \cite{HS81}
followed by a vast number of refinements and extensions,
including sophisticated data fidelity terms going beyond the brightness
\cite{BPS14,BM11,HDW13} and
nonsmooth regularizers, e.g., TV-like ones \cite{ADK99,HSSW02}
including also higher order derivatives \cite{YSM07,YSS07,YSS09a}
and nonlocal regularizers \cite{WPB10}, to mention only few of them.
In general multiscale approaches have to be taken into account to correctly determine
larger and smaller flow vectors \cite{Ana89,BBPW04,DHHM12}.
A good overview is given in \cite{BPS14}.
Recent comprehensive empirical evaluations \cite{BSLRBS11,GLSU13}
show that variational algorithms yield a very good performance.
As for the disparity we deal with variational {\it optical flow partitioning} using
the brightness invariance assumption and a vector-valued Potts prior in this paper.

The classical (discrete) Potts model, named after R. Potts \cite{Pot52}  has the form
\begin{equation} \label{potts_classic}
 \min_u \frac12 \|f - u\|_2 ^2  + \lambda \|\nabla u\|_0,
\end{equation}
where the discrete gradient consists of directional difference operators and $\| \cdot\|_0$ denotes the $\ell_0$ semi-norm.
Computing a global minimizer of the multivariate  Potts model appears to be NP hard \cite{BVZ01,DMA97,Tro06}.
For univariate data this problem can be solved efficiently using dynamic programming \cite{Cha95,FKLW08,MS85,WSD12}.
In the context of Markov random fields such kind of functionals were used by Geman and Geman \cite{GG84} and in \cite{Bes86}.
In \cite{Lec89} a deterministic continuation method to restore piecewise constant images was proposed.
A stochastic continuation approach was introduced and successfully used for the reconstruction of 3D tomographic images in \cite{RLM07}.
The method and the theory were refined in \cite{RM10}.
Recently theoretical results relating the probability for global convergence and the computation speed were given in \cite{RR13}.

There is also a rich literature on $\ell_0$-regularized methods (without additional difference operator) 
in particular in the context of sparsity and on various (convex) relaxation methods (also for data fidelity terms with linear operators).
Here we refer to the overview in \cite{FR14}. Various approximations of the $\ell_0$ norm were used in order to 
guarantee that the objective function has global minimizers; see, e.g., \cite{CJPT13},
among others. Note that the local and the global minimizers of least squares regularized with 
the $\ell_0$ norm were described in \cite{Ni13}.

In this paper, we concentrate ourselves on the (non-relaxed) Potts functional.
We apply the following model:
\begin{equation} \label{potts_gen}
 \min_{u \in S} \frac12 \|f - A u\|_2 ^2  + \lambda \|\nabla u\|_0,
\end{equation}
where $S$ is a certain compact set, $A$ a linear operator and $\|\nabla u\|_0$ a 'grouped' or vector-valued prior now.
We prove the existence of a global minimizer of the functional using the notion of asymptotically level stable functions \cite{Aus00}.
For single-valued data a completely different existence proof was given in \cite{SWD13}.
We apply an ADMM like algorithm to the general Potts model \eqref{potts_gen}.
Such algorithm was proposed for the partitioning of vector-valued images
for the Potts model \eqref{potts_classic} in \cite{SW14}.
It appears to be faster than current methods based on graph cuts and convex relaxations of the Potts model.
In particular the number of values of the sought-after image $u$ is not a priori restricted.
Our algorithm is designed for the model \eqref{potts_gen} which includes non invertible linear operators in the data fidelity term as well
as constraints.
In the context of wavelet frame operators (instead of gradients)
another minimization method for single-valued $\ell_0$-regularized, constrained problems
was suggested in \cite{Lu13,ZDL13}.
It is based on a penalty decomposition and reduces the problem mainly to the iterative solution of
$\ell_2-\ell_0$ problems via hard thresholding. Convergence to a local minimizer is shown in case of an invertible
operator $A$. However, note that in our applications both linear operators $A_1$ and $A$ have usually a nontrivial kernel.
To the best of our knowledge this is the first time that this kind of direct partitioning model was applied
for disparity and optical flow estimation.
\\

The remaining part of the paper is organized as follows:
Our disparity and optical flow partitioning models are presented in Section \ref{sec:models}.
Section \ref{sec:min} provides the proof that the (vector-valued) general Potts model
has a global minimizer.
Then, in Section \ref{sec:alg} an ADMM like algorithm is suggested together with
the convergence proofs for the constrained and unconstrained models.
Numerical experiments are shown in Section \ref{sec:experiments}.
Finally, Section \ref{sec:conclusions}
gives conclusions for future work.
%
\section{Disparity and Optical Flow Partitioning Models}\label{sec:models}
%
In this paper we deal with gray-value images $f: {\cal G} \rightarrow \R$
defined on the grid ${\cal G} := \{1,\ldots,M\} \times \{1,\ldots,N\}$ and
vector fields $u = (u_1,\ldots,u_d): {\cal G} \rightarrow \R^d$,
where $d=1$ in the disparity partitioning problem and $d=2$ in the optical flow
partitioning problem.
Note that
$$u(i,j) = (u_1(i,j),\ldots,u_d(i,j)) \in \R^d, \quad (i,j) \in  {\cal G}.$$
By $\nabla_1$, $\nabla_2$ we denote derivative operators in vertical and horizontal directions,
respectively. More precisely we will use their discrete counterparts.
Among the various possible discretizations of derivative operators we focus on forward differences
$$
\nabla_1 u (i,j) := u(i+1,j) - u(i,j), \quad \nabla_2 u (i,j) := u(i,j+1) - u(i,j)
$$
and assume mirror boundary conditions.
Further
we will need the 'grouped' $\ell_0$ semi-norm for vector-valued data
defined  by
\begin{equation} \label{group}
\|u\|_0 := \sum_{i,j=1}^n \|u(i,j)\|_0, \quad
\|u(i,j)\|_0 :=
\left\{
\begin{array}{ll}
0 &{\rm if} \; u(i,j) = 0_d,\\
1 &{\rm otherwise}.
\end{array}
\right. 
\end{equation}
Here $0_d$ denotes the null vector in $\R^d$. 
If $d=1$ then $\|u\|_0$ is the usual $\ell_0$ 'componentwise' semi-norm for vectors.
For the disparity and optical flow partitioning we will apply the $\ell_0$ semi-norm not directly to
the vectors  but rather to $\nabla_\nu u_1$ and $\nabla_\nu u$,  $\nu=1,2$, respectively, to penalize their spatial differences.
In the disparity problem we consider $\|\nabla u_1\|_0 := \|\nabla_1 u_1\|_0 + \|\nabla_2 u_1\|_0$
and the optical flow problem $\|\nabla u\|_0 := \|\nabla_1 u\|_0 + \|\nabla_2 u\|_0$.
For the later one, $\|\nabla_\nu u\|_0 = \|(\nabla_\nu u_1(i,j),\nabla_\nu u_2(i,j))_{(i,j)}\|_0$
uses indeed the 'grouped' version of the $\ell_0$ semi-norm.
\begin{remark} \label{discrete}
To have a convenient vector-matrix notation  we reorder images $f$ and $u_l$, $l=1,\ldots,d$
columnwise into vectors ${\rm vec} \, f$ and ${\rm vec} \, u_l$ of length $n := NM$.
We address the pixels by the index set $\mathbb I_n := \{1,\ldots,n\}$.
If the meaning is clear from the context we keep the notation $f$ instead of ${\rm vec} \, f$ .
In particular we will have $u_l \in \R^n$ and $u = (u_1^\tT,\ldots,u_d^\tT)^\tT \in \mathbb R^{nd}$.
After columnwise reordering the forward difference operators (with mirror boundary conditions) can be written as matrices
\begin{equation} \label{nabla_discr}
\nabla_1 := I_d \otimes I_M \otimes D_N, \quad \nabla_2 := I_d \otimes D_M^\tT \otimes I_N,
\end{equation}
where $I_N$ denotes the $N\times N$ identity matrix,
$$
D_N :=
\left(
\begin{array}{cccccc}
-1 & 1\\
  & -1 & 1\\
  &   &   & \ddots\\
  &   &   &   & -1 & 1\\
  &   &   &   &   & 0
\end{array}
\right) \in \mathbb R^{N,N}
$$
and $\otimes$ is the tensor (Kronecker) product of matrices.
\end{remark}
Using the indicator function of a set $S$ defined by
\[
\iota_{S} (t) =
\begin{cases}
0 &{\rm if} \ t \in S, \\
\infty &{\rm otherwise},
\end{cases}
\]
we can address box constraints on $u$ by adding the regularizing term $\iota_{S_{Box}}(u)$,
where $$S_{Box} := \{u \in \mathbb R^{dn}: u_{min} \le u \le u_{max}\}.$$

Both in the disparity and optical flow partitioning problems we are given a sequence of images.
In this paper we focus on two images $f_1$ and $f_2$ coming
from
(i) the appropriate left and right images taken, e.g., by a stereo camera (disparity problem),
and
(ii) two image frames at different times arising, e.g., from a video (optical flow problem).
Then the models rely on an invariance requirement between these images.
Various invariance assumptions were considered in the literature
and we refer to \cite{BPS14} for a comprehensive overview.
Here we focus on the brightness invariance assumption.
In the disparity model we address only horizontal displacements and consider in a continuous setting
\begin{equation} \label{bas_disp}
f_1(x,y) - f_2(x - u_1(x,y),y) \approx 0.
\end{equation}
For the optical flow model we assume
\begin{equation} \label{bas_flow}
f_1(x,y) - f_2\big((x,y) - u(x,y) \big) \approx 0, \quad u := (u_1,u_2).
\end{equation}
Using first order Taylor expansions around an initial disparity $\bar u_1$, resp., 
an initial optical flow estimate $\bar u = (\bar u_1,\bar u_2)$,
gives
\begin{align}
{\rm disp.}&: \; f_2(x- u_1,y) \approx f_2(x- \bar u_1,y) -
\nabla_1 f_2(x - \bar u_1,y) (u_1 (x,y) - \bar u_1(x,y)),
\\
{\rm flow}&: \; f_2\big( (x,y) - u) \approx f_2 \big((x,y) - \bar u)\big) -
(\nabla_1 f_2((x,y) - \bar u),  \nabla_2 f_2((x,y) - \bar u) \big) (u(x,y) - \bar u(x,y)) .
\end{align}
To get an initial disparity we will use a simple block-matching approach with NCC as measure for the block similarity,
following the ideas in \cite{CEFPP12,TLC03}.
Then the linearized invariance requirements \eqref{bas_flow} and \eqref{bas_disp} become
\begin{align}
{\rm disp.}&: \; 0  \approx f_1(x,y) - f_2(x- \bar u_1,y) + \nabla_1 f_2(x - \bar u_1,y) (u_1(x,y) - \bar u_1(x,y)),
\\
{\rm flow}&: \; 0  \approx f_1(x,y) - f_2\big( (x,y) - \bar u \big) +
\big(\nabla_1 f_2((x,y) - \bar u),  \nabla_2 f_2((x,y) - \bar u) \big) (u(x,y) - \bar u(x,y)) .
\end{align}
Note that $f_2((x,y) - \bar u)$ is only well defined in the discrete setting if $(i,j) - \bar u$ is in $\cal{G}$.
Later we will see that our method to compute $\bar u$ really fulfills this condition,
thus we can carry over the continuous model to the discrete setting without any modifications.
Using a non-negative increasing function
$\varphi:\mathbb R_{\ge 0} \rightarrow \mathbb R$, and considering only grid points
$(x,y) = (i,j) \in {\cal G}$
the data term for the disparity partitioning model becomes for example
$$
\sum_{(i,j) \in {\cal G}} \varphi \big(\nabla_1 f_2(i - \bar u_1,j) u_1(i,j) -
(\nabla_1 f_2(i - \bar u_1,j) \bar u_1(i,j) + f_2(i- \bar u_1,j) - f_1(i,j) ) \big).
$$
In this paper we will deal with quadratic functions $\varphi(t) := \frac12 t^2$.
Using the notation in Remark \ref{discrete} our partitioning models become 
\begin{align} \label{e_disp}
{\rm disp.}: \; E_{\rm disp} (u_1)  & := \frac12\| A_1 u_1 - b_1\|_2^2 + \mu \, \iota_{S_{Box}}(u_1) +
\lambda \left(\|\nabla_1 u_1 \|_0 + \|\nabla_2 u_1 \|_0 \right),
\\
{\rm flow}: \; E_{\rm flow} (u)   &:= \frac12\| A u - b\|_2^2 + \mu \, \iota_{S_{Box}}(u) +
\lambda \left(\|\nabla_1 u \|_0 + \|\nabla_2 u \|_0 \right), \label{e_flow}
\end{align}
where $\mu \in \{0,1\}$, $\lambda > 0$, 
$\|\cdot\|_0$ stands for the 'group' semi-norm in \eqref{group} and
\begin{align} \label{a1}
A_1 &:= {\rm diag} \left({\rm vec} \big( \nabla_1 f_2(i-\bar u_1,j) \big)\right), \\
A &:= \left({\rm diag} \left({\rm vec} \big( \nabla_1 f_2((i,j) - \bar u) \big)\right),  {\rm diag} \left({\rm vec} \big( \nabla_2 f_2((i,j) - \bar u) \big)\right) \right),
\label{only_a}\\
b_1 &:= {\rm vec} \big(\nabla_1 f_2(i - \bar u_1,j) \bar u_1(i,j)  + f_2(i- \bar u_1,j) - f_1(i,j) \big),
\label{b_1}\\
b &:= {\rm vec} \left( \big(\nabla_1 f_2((i,j) - \bar u),  \nabla_2 f_2((i,j) - \bar u) \big) \bar u(i,j)
+ f_2 ( (i,j) - \bar u) - f_1(i,j) \right).
\label{only_b}
\end{align}
We are looking for minimizers of these functionals.
%
\section{Global Minimizers for Potts Regularized Functionals}\label{sec:min}
%
We want to know if the functionals in \eqref{e_disp} and \eqref{e_flow} have global minimizers. 
Both $E_{\rm disp}$ and $E_{\rm flow}$ are lower semi-continuous (l.s.c.) and proper functionals.
When $\mu=1$, the minimization of $E_{\rm disp}$ and $E_{\rm flow}$ is constrained to
the {\em compact} set $S_{Box}$ in which case
\eqref{e_disp} and \eqref{e_flow} have global minimizers; see, e.g.,
\cite[Proposition 3.1.1,~p. 82]{AT03}.

Next we focus on the case  $\mu = 0$.
More general, we consider for arbitrary given $A \in \R^{n,dn}$, $b \in \R^n$ and $p\ge 1$
functionals $E: \mathbb R^{dn} \rightarrow \R$ of the form
\begin{equation} \label{fd}
E(u) := \frac1p\| A u - b\|_p^p  +
\lambda \left(\|\nabla_1 u \|_0 + \|\nabla_2 u \|_0 \right), \qquad \lambda > 0.
\end{equation}
The existence of a global minimizer was proved in the case $d=1$ in \cite{SWD13}.
Here we give a shorter and more general proof that holds for any $d \ge 1$ using the notion of
asymptotically level stable functions.
This wide class of functions was introduced by Auslender~\cite{Aus00} in 2000 and since then 
it appeared 
that many problems on the existence of optimal solutions 
are easily solved for these functions.
As usual,
\[\lev(E,\lambda) := \{u\in \R^{dn}: E(u)\leq \lambda \} \quad  {\rm for} \quad \lambda >\inf_u E(u)~;\]
by  $E_\infty$ we denote the {\it asymptotic (or recession) function} of $E$ and
$$
\ker(E_\infty) := \{ u\in\R^{dn}: E_\infty(u) = 0\}
.$$
The following definition is taken from \cite[p.~94]{AT03}:
a l.s.c.  and proper function $E:\R^{dn} \rightarrow \R \cup \{+\infty\}$
is said to be {\sl asymptotically level stable (als)}
if for each $\rho>0$,
each real-valued, bounded sequence $\{\lambda_k\}_k$ and
each sequence $\{u_k\}\in\R^{dn}$ satisfying
\begin{equation} \label{aal}
u_k \in \lev(E,\lambda_k), \quad \|u_k\| \rightarrow +\infty, \quad \frac{u_k}{\|u_k\|} \rightarrow \tilde u \in \ker (E_\infty),
\end{equation}
there exists $k_0$ such that
\begin{equation} \label{lsa}
u_k- \rho \tilde u \in \lev(E,\lambda_k)\quad \forall k\geq k_0.
\end{equation}
If for each real-valued, bounded sequence $\{\lambda_k\}_k$ there exists no sequence $\{u_k\}_k$
satisfying \eqref{aal}, then $E$ is automatically als. 

In particular, coercive functions are  als.
It was originally exhibited in \cite{BBGT98} (without the
notion of als functions) that any als function $E$
with $\inf E>-\infty$ has a global minimizer.
The proof is also given in \cite[Corollary 3.4.2]{AT03}.
We show that the discontinuous non-coercive objective $E$ in \eqref{fd} 
is als and has thus a global minimizer.
%
\begin{theorem} \label{pal}
Let $E:\R^{dn} \rightarrow \R$ be of the form \eqref{fd}.
Then the following relations hold true:
\begin{itemize}
\item[i)] $\ker (E_\infty )=\ker(A)$.
\item[ii)] $E$ is als.
\item[iii)] $E$ has a global minimizer.
\end{itemize}
\end{theorem}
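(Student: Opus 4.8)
The plan is to establish the three items in order, the real work lying in (ii). Throughout I write $E = g + h$ with $g(u) := \frac1p\|Au-b\|_p^p$ and $h(u) := \lambda(\|\nabla_1 u\|_0 + \|\nabla_2 u\|_0)$, and I use that on the finite grid the non-convex term is bounded, $0 \le h \le C$ for a constant $C$.

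For (i) I would first show $E_\infty = g_\infty$. The pointwise bounds $g \le E \le g + C$ give, for every $t>0$ and every $d'$, the estimate $\frac{g(td')}{t} \le \frac{E(td')}{t} \le \frac{g(td')}{t} + \frac{C}{t}$, so upon letting $t\to\infty$ and $d'\to d$ the bounded contribution disappears and the liminf defining $E_\infty(d)$ is squeezed to that defining $g_\infty(d)$. Since $g$ is convex and finite, $g_\infty(d) = \lim_{t\to\infty} g(td)/t$, which equals $0$ when $Ad = 0$ and is strictly positive otherwise (indeed $+\infty$ for $p>1$ and $\|Ad\|_1$ for $p=1$, using $\|t A d' - b\|_p \ge t\|Ad'\|_p - \|b\|_p$). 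Hence $\ker(E_\infty) = \ker(g_\infty) = \ker(A)$, which is (i).

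For (ii) let $\rho>0$, a bounded sequence $\{\lambda_k\}_k$ and a sequence $\{u_k\}_k$ as in \eqref{aal} be given, so in particular $\tilde u \in \ker(E_\infty) = \ker(A)$ by (i). Since $E(u_k) \le \lambda_k$, it suffices to produce $k_0$ with $E(u_k - \rho\tilde u) \le E(u_k)$ for all $k \ge k_0$. As $A\tilde u = 0$, the data terms agree, $\|A(u_k-\rho\tilde u)-b\|_p = \|Au_k-b\|_p$, so only the jump counts must be compared. Fixing $\nu \in \{1,2\}$, I split the difference locations into $J_\nu := \{(i,j) : \nabla_\nu \tilde u(i,j) \ne 0\}$ and its complement. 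On the complement $\nabla_\nu(u_k - \rho\tilde u) = \nabla_\nu u_k$, so the supports coincide there. On $J_\nu$ the convergence $u_k/\|u_k\| \to \tilde u$ forces $\nabla_\nu u_k(i,j)/\|u_k\| \to \nabla_\nu \tilde u(i,j) \ne 0$, whence $\nabla_\nu u_k(i,j) \ne 0$ for $k$ large; as $J_\nu$ is finite this holds simultaneously for all its points from one index on. Thus for large $k$ every location of $J_\nu$ is already counted in $\|\nabla_\nu u_k\|_0$, while $u_k - \rho\tilde u$ contributes at most $|J_\nu|$ there, giving $\|\nabla_\nu(u_k-\rho\tilde u)\|_0 \le \|\nabla_\nu u_k\|_0$. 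Summing over $\nu$ yields $h(u_k-\rho\tilde u) \le h(u_k)$ and therefore $E(u_k-\rho\tilde u) \le E(u_k)$, which is \eqref{lsa}.

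Finally (iii) is immediate: $E$ is proper and l.s.c. (the data term is continuous and each $u \mapsto \|\nabla_\nu u\|_0$ is l.s.c.) and bounded below by $0$, so by (ii) and the existence result for als functions quoted above (\cite{BBGT98}, \cite[Corollary 3.4.2]{AT03}) $E$ attains its infimum. I expect the counting argument in (ii) to be the only delicate point: one must verify that subtracting $\rho\tilde u$ creates no jump outside the jump set of $\tilde u$, and that asymptotically every jump of $\tilde u$ is absorbed into a jump of $u_k$, so that the discontinuous term $h$ cannot increase along the shift $u_k \mapsto u_k - \rho\tilde u$.
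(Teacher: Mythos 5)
Your proof is correct and follows essentially the same route as the paper's: part (i) rests on the observation that the bounded $\ell_0$ term contributes nothing to $\liminf E(tu')/t$, part (ii) uses the identical jump-counting argument on the support of $\nabla_\nu \tilde u$ versus its complement combined with $A\tilde u = 0_n$ to get $E(u_k - \rho\tilde u) \le E(u_k)$ for large $k$, and part (iii) invokes the same existence result for als functions from \cite{BBGT98} and \cite[Corollary 3.4.2]{AT03}. The only cosmetic difference is that you present (i) as a squeeze of $E_\infty$ between $g_\infty$ and the asymptotic function of $g + C$, whereas the paper evaluates the defining $\liminf$ directly; the underlying computation is the same.
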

%
\begin{proof}
i) The asymptotic function $E_\infty$ of $E$ can be calculated
according to \cite{Ded77}, see also \cite[Theorem 2.5.1]{AT03}, as
\[
E_\infty (u) =
\liminf_{\atop{u'\rightarrow u}{t\rightarrow\infty}}\frac{E(tu')}{t}.
\]
Then
\begin{align}
E_\infty(u)
&=\liminf_{\atop{u'\rightarrow u}{t\rightarrow\infty}} \frac{ \frac1p\|At u'-b\|_p^p + \|\nabla_1 (t u')\|_0 + \|\nabla_2 (t u')\|_0}{t} \\
&=\liminf_{\atop{u'\rightarrow u}{t\rightarrow\infty}} \left( \frac1p t^{p-1} \|A u'- \frac1t b\|_p^p + \frac{\|\nabla_1 (t u')\|_0 + \|\nabla_2 (t u')\|_0}{t} \right) \\
&=\left\{
\begin{array}{lll}
0 & {\rm if} & u\in \ker(A),\\
+\infty & {\rm if} & u \not\in \ker(A) \; {\rm and} \; p > 1,\\
\|A u\|_1 & {\rm if} & u \not\in \ker(A) \; {\rm and} \; p = 1,
\end{array}
\right.
\label{auv}
\end{align}
and consequently
$\ker (E_\infty) = \ker(A)$.

ii)
Let $\{u_k\}_k$ satisfy \eqref{aal} with $u_k\,\|u_k\|^{-1}  \rightarrow \tilde u \in \ker(A)$
and let $\rho>0$ be arbitrarily fixed.
Below we compare the numbers
$\|\nabla_\nu u_k\|_0$ and $\|\nabla_\nu (u_k-\rho \tilde u)\|_0$, $\nu = 1,2$.
There are two options.
\\
If $(i,j) \in \supp(\nabla_1 \tilde u) := \{(i,j) \in {\cal G}: \tilde u(i+1,j) - \tilde u(i,j) \not = 0_d\}$,
then
\[
\tilde u(i,j) - \tilde u(i+1,j) = \lim_{k\rightarrow\infty} \frac{u_k (i,j) - u_k (i+1,j)}{\|u_k\|} \neq 0_d
\]
and
$\| u_k(i,j) - u_k(i+1,j) \|>0$
for all but finitely many $k$.
Therefore, there exists $k_1(i,j)$ such that
\begin{equation} \label{wb}
\| u_k(i,j) - u_k (i+1,j) - \rho (\tilde u(i,j) -  \tilde u(i+1,j) ) \|_0 \leq \| u_k(i,j) - u_k(i,j+1) \|_0
\quad
\forall k \geq k_1(i,j).
\end{equation}
If $(i,j) \in {\cal G} \backslash \supp(\nabla_1 \tilde u)$, i.e., $\tilde u(i,j) - \tilde u(i+1,j) = 0_d$, then clearly
\begin{equation} \label{wc}
u_k(i,j) - u_k(i+1,j) - \rho (\tilde u(i,j) - \tilde u(i+1,j) ) = u_k(i,j) - u_k(i+1,j) .
\end{equation}
Combining \eqref{wb} and \eqref{wc} shows that
\begin{equation}\label{we}
\| u_k(i,j) - u_k(i+1,j) - \rho (\tilde u(i,j) - \tilde u(i+1,j) ) \|_0
\leq
\| u_k(i,j) - u_k(i+1,j) \|_0 \quad \forall k\geq k_1(i,j)
\end{equation}
and hence
\begin{equation}\label{we1}
\| \nabla_1(u_k - \rho \tilde u) \|_0 \leq \| \nabla_1 \, u_k \|_0
\quad \forall k \geq k_1 := \max \{k_1(i,j): (i,j) \in {\cal G}\}.
\end{equation}
In the same way, there is $k_2$
so that
\begin{equation} \label{wea}
\|\nabla_2 (u_k - \rho \tilde u)\|_0 \leq \|\nabla_2 u_k\|_0 \quad \forall k \geq k_2.
\end{equation}
By part i) of the proof we know that $A \tilde u = 0_n$ which jointly
with~\eqref{we1} and \eqref{wea}
implies
for all $k \ge k_0:=\max\{k_1,k_2\}$ that
\begin{align}
E(u_k- \rho \tilde u)
& = \frac1p \|A(u_k - \rho\tilde u ) - b\|_p^p + \lambda (\|\nabla_1 (u_k - \rho \tilde u)\|_0 + \|\nabla_2 (u_k - \rho \tilde u)\|_0)\\
& = \frac1p \|A u_k-b\|_p^p + \lambda (\|\nabla_1 (u_k - \rho \tilde u) \|_0 + \|\nabla_2 (u_k - \rho \tilde u)\|_0)\\
& \leq \frac1p \|Au_k-b\|_p^p + \lambda (\|\nabla_1 u_k\|_0 + \|\nabla_2 (u_k) \|_0) = E(u_k) .
\label{hb}
\end{align}
Hence it follows  by $u_k \in \lev(E,\lambda_k)$ that $u_k - \rho \tilde u \in \lev(E,\lambda_k)$ for any $k\geq k_0$.
Consequently $E$ is als.
\\
Finally, iii) follows directly from \cite[Corollary 3.4.2]{AT03}.
\end{proof}
%
\section{ADMM-like Algorithm}\label{sec:alg}
In this section we follow an idea in \cite{SW14} to approximate
minimizers of our more general functionals $E_{\rm disp}$ and $E_{\rm flow}$.
Basically the problem is reduced to the iterative computation of minimizers of the univariate classical Potts problem
for which there exist efficient solution techniques using dynamic programming \cite{FKLW08}.
Here we apply the method proposed in \cite{WSD12,SWD13}.
We consider
\begin{equation}\label{model_general}
		\min_{u \in \mathbb R^{nd}}  \Big\{  F(u) + \lambda \big( \|\nabla_1 u\|_0 + \|\nabla_2 u\|_0 \big)\Big\}	.
\end{equation}
Clearly, we have
\begin{align} \label{data_disp}
{\rm disp.} \; (d = 1): \quad F(u) &:= \frac12 \|A_1 u - b_1\|^2_2 + \mu \, \iota_{S_{Box}}(u), \quad u = u_1,\\
{\rm flow} \; (d = 2): \quad F(u) &:= \frac12 \|A u - b\|^2_2 + \mu \, \iota_{S_{Box}}(u), \quad u = (u_1^\tT,u_2^\tT)^\tT.\label{data_flow}
\end{align}
For $\mu = 1$ we have a (box) constrained problem; for $\mu = 0$ an unconstrained one.
In \cite{SW14} partitioning problems of vector-valued images with
$F(u) := \frac12 \|u - b\|_2^2$ were considered. 
In our setting a linear operator is involved into the data
term which is not a diagonal operator in the optical flow problem, see \eqref{a1}, 
and in both cases \eqref{a1} and \eqref{only_a} it has a non-trivial kernel.
Further, we may have box constraints in addition.
The minimization problem can be rewritten as
\begin{equation}
		\min_{u,v,w \in \mathbb R^{nd}}  \Big\{  F(u) + \lambda \big( \|\nabla_1 v\|_0 + \|\nabla_2 w\|_0 \big) \quad \mbox{subject to}
		\quad v = u,\; w=u\Big\}	.
\end{equation}
To find an approximate (local) minimizer we suggest the following algorithm which
resembles the basic structure of an alternating direction method of multipliers (ADMM) \cite{BPCPE10,Gab83} but
with inner parameters $\eta^{(k)}$ which has to go to infinity.
%
\begin{algorithm}[H]
\caption{ADMM-like Algorithm \label{A1}}
\begin{algorithmic}
\STATE \textbf{Initialization:}  $v^{(0)},w^{(0)},q_1^{(0)},q_2^{(0)},\eta^{(0)}$ and $\sigma > 1$
\STATE \textbf{Iteration:} For $k = 0,1,\ldots$ iterate
\begin{align} \label{admm_sol_u_gen}
u^{(k+1)} &\in  \argmin_{u} \Big\{
	F(u) + \frac{\eta^{(k)}}{2} \big(\|u-v^{(k)}+q_1^{(k)}\|_2^2
		+ \|u-w^{(k)}+q_2^{(k)}\|_2^2
	\big) \Big\},
\\
\label{admm_sol_x}
v^{(k+1)} & \in \argmin_v \Big\{
	\lambda \|\nabla_1 v\|_0 + \frac{\eta^{(k)}}{2} \|u^{(k+1)}-v+q_1^{(k)}\|_2^2  \Big\},
\\
\label{admm_sol_y}
w^{(k+1)} & \in	\argmin_w \Big\{
	\lambda \|\nabla_2 w\|_0 + \frac{\eta^{(k)}}{2} \|u^{(k+1)}-w+q_2^{(k)}\|_2^2  \Big\},
\\
\label{b1}
q_1^{(k+1)} &= q_1^{(k)} + u^{(k+1)} - v^{(k+1)},
\\
\label{b2}
q_2^{(k+1)} &= q_2^{(k)} + u^{(k+1)} - w^{(k+1)},
\\
\label{eta}
\eta^{(k+1)} &= \eta^{(k)} \sigma.
\end{align}
\end{algorithmic}
\end{algorithm}
%
Step 1 of the algorithm in \eqref{admm_sol_u_gen} can be computed
for our optical flow term $F$ in \eqref{data_flow}
and $\mu = 0$ by setting the gradient of the respective function to zero.
Then  $u^{(k+1)}$
is the solution of the linear system of equations
$$
(A^\tT A + 2 \eta^{(k)} I_{dn}) u = A^\tT b + \eta^{(k)} \left( v^{(k)}-q_1^{(k)} + w^{(k)}-q_2^{(k)}\right).
$$
For the disparity problem \eqref{data_disp} we have just to replace $A$ by $A_1$ which is a simple diagonal matrix
and $b$ by $b_1$.
For $\mu = 1$ and the disparity problem, $u^{(k+1)}$ can be computed componentwise by straightforward computation as
$$
u^{(k+1)} = \max\left\{\min\{ u^{(k+\frac12)}, u_{\max} \}, u_{\min} \right\},
$$
where
\begin{equation} \label{lin_syst}
u^{(k+\frac12)} := (A_1^\tT A_1 + 2 \eta^{(k)} I_{n})^{-1}
\left(A_1^\tT b_1 + \eta^{(k)} \left( v^{(k)}-q_1^{(k)} + w^{(k)}-q_2^{(k)} \right) \right).
\end{equation}
For the optical flow problem and  $\mu = 1$ we have to minimize
a box constrained quadratic problem for which there exist efficient algorithms, see, e.g., \cite{BNO03}.
In our numerical part the optical flow problem is handled without constraints, i.e. for $\mu = 0$.
In this case, only the linear system of equations \eqref{lin_syst} has to be solved.

The Steps 2 and 3 in \eqref{admm_sol_x} and \eqref{admm_sol_y} 
are univariate Potts problems which can be solved efficiently using the method proposed in \cite{SW14,WSD12}.
As shown in \cite{SW14} the vector-valued univariate Potts problem can be tackled nearly in the same way as in the scalar-valued case.
The arithmetic complexity is ${\cal O}(dn^\frac32)$ if $N \sim M$.
\\

Next we prove the convergence of Algorithm \ref{A1}.
Due to the NP hardness of the problem we can in general not expect that the limit point
is in general a (global) minimizer of the cost function.
First we deal with a general situation which involves our unconstrained problems ($\mu = 0$).
We assume that any vector in the subdifferential $\partial F$ of $F$ fulfills the growth constraint
\begin{equation} \label{subdiff_prop}
	u^* \in \partial F(u) \quad \Rightarrow \quad \|u^*\|_2 \le C(\|u\|_2 + 1).
\end{equation}
It can be easily checked that $F: \mathbb R^{dn} \rightarrow \mathbb R^n$ with
$F(u) := \frac1p\|M u - m\|_p^p$, $p \in [1,2]$ fulfills \eqref{subdiff_prop}
for any matrix $M \in \mathbb R^{n,dn}$ and $m \in \R^n$.
Note that the variable $C$ stands for any constant in the rest of the paper.
%
\begin{theorem} \label{thm1}
		Let $F:\mathbb R^{dn} \rightarrow \mathbb R \cup \{ + \infty \}$ be a proper, closed, convex function which fulfills \eqref{subdiff_prop}.
		Then Algorithm \ref{A1} converges in the sense that
		$(u^{(k)}, v^{(k)}, w^{(k)}) \rightarrow (\hat u, \hat v, \hat w)$ as $k \rightarrow \infty$
                with $\hat u = \hat v = \hat w$ and $(q_1^{(k)}, q_2^{(k)}) \rightarrow (0,0)$ as $k \rightarrow \infty$.
\end{theorem}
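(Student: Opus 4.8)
The plan is to read three facts off the updates in turn: a quantitative decay of the dual variables $q_1^{(k)},q_2^{(k)}$, a summable bound on the increments $u^{(k+1)}-u^{(k)}$, and the transfer of the limit of $u^{(k)}$ to $v^{(k)}$ and $w^{(k)}$. I begin with the duals. Comparing the value of the $v$-subproblem \eqref{admm_sol_x} at its minimizer $v^{(k+1)}$ with its value at the competitor $v=u^{(k+1)}+q_1^{(k)}$ (which annihilates the quadratic term), and bounding the grouped $\ell_0$ term by the number $n$ of grid points, yields $\tfrac{\eta^{(k)}}{2}\|q_1^{(k+1)}\|_2^2\le \lambda n$, because $u^{(k+1)}-v^{(k+1)}+q_1^{(k)}=q_1^{(k+1)}$ by \eqref{b1}. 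Thus $\|q_i^{(k+1)}\|_2^2\le 2\lambda n/\eta^{(k)}$ for $i=1,2$, and since $\eta^{(k)}=\eta^{(0)}\sigma^k\to\infty$ with $\sigma>1$, the duals vanish at the geometric rate $\sigma^{-k/2}$. In particular $u^{(k)}-v^{(k)}=q_1^{(k)}-q_1^{(k-1)}\to 0$ and $u^{(k)}-w^{(k)}=q_2^{(k)}-q_2^{(k-1)}\to 0$.

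Next I convert the optimality condition of the $u$-subproblem \eqref{admm_sol_u_gen} into a bound on the step length; this is the technical heart. Convexity of $F$ and smoothness of the quadratic penalty give some $s^{(k)}\in\partial F(u^{(k+1)})$ with $s^{(k)}=-\eta^{(k)}\big[(u^{(k+1)}-v^{(k)}+q_1^{(k)})+(u^{(k+1)}-w^{(k)}+q_2^{(k)})\big]$. Substituting $v^{(k)}=u^{(k)}-(q_1^{(k)}-q_1^{(k-1)})$ and its analogue for $w^{(k)}$, the bracket becomes $2(u^{(k+1)}-u^{(k)})+(2q_1^{(k)}-q_1^{(k-1)})+(2q_2^{(k)}-q_2^{(k-1)})$. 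Applying the growth condition \eqref{subdiff_prop} to $s^{(k)}$ together with the Step~1 estimates (which make $\eta^{(k)}$ times each $q$-term of order $\sqrt{\eta^{(k)}}$), the triangle inequality gives
\[
\|u^{(k+1)}-u^{(k)}\|_2\le \frac{C(\|u^{(k+1)}\|_2+1)}{2\eta^{(k)}}+\frac{C}{2\sqrt{\eta^{(k)}}}.
\]

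The main obstacle is that this estimate is implicit, since $\|u^{(k+1)}\|_2$ reappears on the right-hand side, so smallness of the steps is not yet guaranteed. I would break the circularity by first proving that $\{u^{(k)}\}$ is bounded. The reverse triangle inequality recasts the displayed bound, for $k$ large, as $\|u^{(k+1)}\|_2\le(1+\alpha_k)\|u^{(k)}\|_2+\beta_k$ with $\alpha_k$ of order $1/\eta^{(k)}$ and $\beta_k$ of order $1/\sqrt{\eta^{(k)}}$, both summable because $\sigma>1$; a discrete Gronwall argument (using $\prod_k(1+\alpha_k)<\infty$) then bounds $\sup_k\|u^{(k)}\|_2=:B$. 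With this bound in hand the displayed estimate becomes $\|u^{(k+1)}-u^{(k)}\|_2\le C(B+1)/(2\eta^{(k)})+C/(2\sqrt{\eta^{(k)}})$, which is summable, so $\{u^{(k)}\}$ is Cauchy and converges to some $\hat u$. Finally $v^{(k)}=u^{(k)}-(q_1^{(k)}-q_1^{(k-1)})\to\hat u$ and $w^{(k)}=u^{(k)}-(q_2^{(k)}-q_2^{(k-1)})\to\hat u$, whence $\hat u=\hat v=\hat w$, while $(q_1^{(k)},q_2^{(k)})\to(0,0)$ is Step~1. The hypotheses enter exactly here: convexity, closedness and properness make the subproblems well posed and justify the subdifferential optimality condition, while the growth bound \eqref{subdiff_prop} is precisely what allows the subgradient to be traded for $\|u^{(k+1)}\|_2$ and the Gronwall estimate to close.
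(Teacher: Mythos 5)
Your proof is correct and follows essentially the same route as the paper: the dual variables are shown to decay like $(\eta^{(k)})^{-1/2}$ by comparing the Potts subproblem value at its minimizer with the competitor $u^{(k+1)}+q_i^{(k)}$, and the Fermat optimality condition of the $u$-step combined with the growth bound \eqref{subdiff_prop} yields a geometrically summable bound on $\|u^{(k+1)}-u^{(k)}\|_2$, after which convergence of $v^{(k)},w^{(k)}$ to the same limit is read off from the vanishing duals. The one place you diverge is in resolving the implicit recursion $\|u^{(k+1)}\|_2\le(1+\alpha_k)\|u^{(k)}\|_2+\beta_k$: the paper absorbs the factor $\bigl(1-C/(2\eta^{(k)})\bigr)^{-1}$ into $\tau:=\sqrt{\sigma}$ and settles for the growth estimate $\|u^{(k)}\|_2\le C\tau^{k}$, which still suffices because $\tau^{k}/\eta^{(k)}\to0$ geometrically, whereas your discrete Gronwall argument using $\sum_k\alpha_k<\infty$ and $\sum_k\beta_k<\infty$ gives the stronger conclusion that $\{u^{(k)}\}$ is actually bounded. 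Both close the argument; yours is marginally cleaner and delivers boundedness of the iterates for free.
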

	\begin{proof}
		By \eqref{b1} we have
		\begin{align}\label{eqn-min-g1}
			\frac{\eta^{(k)}}{2}\|q_1^{(k+1)} \|_2^2  
			&= 
		  \frac{\eta^{(k)}}{2}\|u^{(k+1)}-v^{(k+1)}+q_1^{(k)}\|_2^2 \\
		  &\le   
			\lambda \|\nabla_1 v^{(k+1)}\|_0
			+ \frac{\eta^{(k)}}{2}\|u^{(k+1)}-v^{(k+1)}+q_1^{(k)}\|_2^2 
			\end{align}
and by \eqref{admm_sol_x} further
\begin{align}	  
\frac{\eta^{(k)}}{2}\|q_1^{(k+1)} \|_2^2  
&\le  \lambda \|\nabla_1(u^{(k+1)}+q_1^{(k)})\|_0 + \frac{\eta^{(k)}}{2}\|u^{(k+1)}- (u^{(k+1)}+q_1^{(k)} )+q_1^{(k)}\|_2^2 \\
&\le  \lambda \|\nabla_1(u^{(k+1)}+q_1^{(k)})\|_0 \\
&\le \lambda n.
\end{align}
By \eqref{b2} and \eqref{admm_sol_y} we conclude similarly
		\begin{equation} \label{eqn-min-h1}
		\frac{\eta^{(k)}}{2}\|q_2^{(k+1)} \|_2^2  \le \lambda n.
		\end{equation}
		Hence it follows
		\begin{equation}\label{eqn-min-b2-b3}
		\|q_1^{(k+1)} \|_2^2  \le  \frac{2\lambda n}{\eta^{(k)}}
		\quad {\rm and}\quad
		\|q_2^{(k+1)} \|_2^2  \le  \frac{2\lambda n}{\eta^{(k)}},
		\end{equation}
		which implies
		$q_1^{(k+1)} \rightarrow 0$ and $q_2^{(k+1)} \rightarrow 0$ as $k\rightarrow \infty$.
		Further, we obtain by $u^{(k)} - v^{(k)} = q_1^{(k)} - q_1^{(k-1)}$ that
		\begin{equation} \label{star2}
		\|v^{(k)} - u^{(k)} \|_2  	\le  \| q_1^{(k)} \|_2 + \| q_1^{(k-1)} \|_2
					\le  \sqrt{\frac{2\lambda n}{\eta^{(k-1)}}} + \sqrt{\frac{2\lambda n}{\eta^{(k-2)}}}
					\le  2\sqrt{\frac{2\lambda n}{\eta^{(k-2)}}}
		\end{equation}
		and analogously
		\begin{equation} \label{star3}
		\|w^{(k)} - u^{(k)} \|_2  	\le  2\sqrt{\frac{2\lambda n}{\eta^{(k-2)}}}.
		\end{equation}
		For
$\epsilon(k) := v^{(k)}-u^{(k)}-q_1^{(k)} + w^{(k)}-u^{(k)}-q_2^{(k)}$
		we get by \eqref{eqn-min-b2-b3} - \eqref{star3} that
		\begin{align} \nonumber
		  \| \epsilon(k) \|_2	&\le \| q_1^{(k)} \|_2 + \| q_2^{(k)} \|_2 + \| v^{(k)} - u^{(k)} \|_2 + \| w^{(k)} - u^{(k)} \|_2 \\
					&\le \sqrt{\frac{2\lambda n}{\eta^{(k-1)}}} + \sqrt{\frac{2\lambda n}{\eta^{(k-1)}}}
                                        + 2\sqrt{\frac{2\lambda n}{\eta^{(k-2)}}} + 2\sqrt{\frac{2\lambda n}{\eta^{(k-2)}}}
					\le 6\sqrt{\frac{2\lambda n}{\eta^{(k-2)}}}, \label{important}
		\end{align}
i.e., $\| \epsilon(k)\|_2$ decreases exponentially.
		By Fermat's theorem the proximum $u^{(k+1)}$ in \eqref{admm_sol_u_gen} has to fulfill
$$0 \in \partial F(u^{(k+1)} ) + \eta^{(k)} (u^{(k+1)} - v^{(k)} + q_1^{(k)} + u^{(k+1)} - w^{(k)} + q_2^{(k)})$$
so that there exists $p^{(k+1)} \in F(u^{(k+1)})$ satisfying
		\begin{align}
		  0 &= p^{(k+1)} + \eta^{(k)} (u^{(k+1)} - v^{(k)} + q_1^{(k)} + u^{(k+1)} - w^{(k)} + q_2^{(k)})\\
		    &= p^{(k+1)} + \eta^{(k)} (u^{(k)} - v^{(k)} + q_1^{(k)} + u^{(k)} - w^{(k)} + q_2^{(k)}) + 2 \eta^{(k)} (u^{(k+1)} - u^{(k)}) \\
		    &= p^{(k+1)} + \eta^{(k)} \epsilon(k) + 2 \eta^{(k)} (u^{(k+1)} - u^{(k)}).
		\end{align}
		Rearranging terms, taking the norm and applying the triangle inequality leads to
		\begin{align} \label{eq:proof_eq1}
		  \| u^{(k+1)} - u^{(k)}\|_2 \le \frac{\| p^{(k+1)} \|_2}{2 \eta^{(k)}} + \frac{1}{2} \| \epsilon(k)\|_2.
		\end{align}
		Since $\|x-y\| \ge \|x\| - \|y\|$ and by assumption \eqref{subdiff_prop} it follows
		\begin{align} \label{eq:proof_eq2}
		  \| u^{(k+1)}\|_2 	&\le \frac{\| p^{(k+1)} \|_2}{2 \eta^{(k)}} + \frac{1}{2} \|\epsilon(k)\|_2 + \| u^{(k)}\|_2 \\
					&\le \frac{C \| u^{(k+1)} \|_2}{2 \eta^{(k)}} + \frac{C}{2 \eta^{(k)}}+ \frac{1}{2} \| \epsilon(k)\|_2 + \| u^{(k)}\|_2 .
		\end{align}
		Since $\frac{C}{2 \eta^{(k)}} \rightarrow 0$ as $k \rightarrow \infty$, there exists a $K$ such that
$1 < \frac{1}{1 - \frac{C}{2 \eta^{(k)}}} \le \tau := \sqrt{\sigma}$
for all $k >K$.
		Now \eqref{eq:proof_eq2} implies
		\begin{align}
		  \| u^{(k+1)}\|_2 \left(1 - \frac{C}{2 \eta^{(k)}}\right) 	&\le \frac{C}{2 \eta^{(k)}}+ \frac{1}{2} \|\epsilon(k)\|_2 + \| u^{(k)}\|_2
		\end{align}
		which gives for $k > K$ the estimates
		\begin{align}
		  \| u^{(k+1)}\|_2  	&\le \tau \frac{C}{2 \eta^{(k)}}+ \tau\frac{1}{2} \|\epsilon(k)\|_2 + \tau\| u^{(k)}\|_2 \\
					&\le \tau \frac{C}{2 \eta^{(k)}}+ \tau\frac{1}{2} \|\epsilon(k)\|_2 + \tau^2 \frac{C}{2 \eta^{(k-1)}}+ \tau^2\frac{1}{2} \|\epsilon(k-1)\|_2 + \tau^2\| u^{(k-1)}\|_2 \\
					&\le \tau^{k+1-K} \| u^{(K)}\|_2 + \sum_{j=1}^{k+1-K} \frac{C \tau^{j}}{2 \eta^{(k+1-j)}}+ \sum_{j=1}^{k+1-K} \frac{\tau^j}{2} \|\epsilon(k+1-j)\|_2 \\
					&\le \tau^{k+1} \big( \| u^{(K)}\|_2 + \sum_{j=1}^{k+1-K} \frac{C}{2 \eta^{(k+1-j)}}+ \sum_{j=1}^{k+1-K} \frac{1}{2} \|\epsilon(k+1-j)\|_2 \big)
                \end{align}
and by the exponential decay of $\| \epsilon(k) \|_2$ with $\eta^{(k)}$ further
\begin{align}
\| u^{(k+1)}\|_2 &\le C \tau^{k+1}.
		\end{align}
Using this relation together with \eqref{subdiff_prop} and \eqref{eta} in \eqref{eq:proof_eq1} we conclude
		\begin{align}
		  \| u^{(k+1)} - u^{(k)}\|_2 		&\le \frac{\| p^{(k+1)} \|_2}{2 \eta^{(k)}} + \frac{1}{2} \| \epsilon(k)\|_2 \\
						&\le \frac{C \| u^{(k+1)}\|_2 }{2 \eta^{(k)}}+ \frac{C }{2 \eta^{(k)}} + \frac{1}{2} \| \epsilon(k)\|_2  \\
						&\le \frac{C^2 \tau^{k+1} }{2 \eta^{(k)}}+ \frac{C }{2 \eta^{(k)}} + \frac{1}{2} \| \epsilon(k)\|_2 \\
						&\le \frac{C^2 }{2 \eta^{(0)} \sigma^{\frac{k-1}{2}}}+ \frac{C }{2 \eta^{(k)}} + 3\sqrt{\frac{2\lambda n}{\eta^{(k-2)}}}.
		\end{align}
		Thus, $\| u^{(k+1)} - u^{(k)}\|_2$ decreases exponentially. Therefore it is a Cauchy sequence and
		 $\{ u^{(k)} \}_k$ converges to some $\hat u$ as $k \rightarrow \infty$. Since $q_1^{(k)} \rightarrow 0$
		and $q_2^{(k)} \rightarrow 0$ as $k \rightarrow \infty$ we obtain by \eqref{b1} and \eqref{b2} that
		$\{ v^{(k)} \}_k$ and $\{ w^{(k)} \}_k$ also converge to $\hat u$. This finishes the proof.
	\end{proof}
The assumptions in the next theorem fit to our constrained models ($\mu = 1$), but are more general.
%
	\begin{theorem} \label{thm2}
		Let $F:\mathbb R^{dn} \rightarrow \mathbb R \cup \{ + \infty \}$ be any function which is bounded on its domain.
		Further assume that \eqref{admm_sol_u_gen} has a global minimizer.
                Then Algorithm \ref{A1} converges in the sense that
		$(u^{(k)}, v^{(k)}, w^{(k)}) \rightarrow (\hat u, \hat v, \hat w)$ as $k \rightarrow \infty$
                with $\hat u = \hat v = \hat w$ and $(q_1^{(k)}, q_2^{(k)}) \rightarrow (0,0)$ as $k \rightarrow \infty$.		
	\end{theorem}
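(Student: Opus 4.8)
The plan is to reuse, essentially verbatim, the first half of the proof of Theorem \ref{thm1} and to replace only the step where convexity of $F$ was exploited. First I would observe that the derivation of the bounds $\|q_1^{(k+1)}\|_2^2 \le 2\lambda n/\eta^{(k)}$ and $\|q_2^{(k+1)}\|_2^2 \le 2\lambda n/\eta^{(k)}$ in \eqref{eqn-min-b2-b3}, and consequently the exponential decay of $\|v^{(k)} - u^{(k)}\|_2$ and $\|w^{(k)} - u^{(k)}\|_2$ in \eqref{star2}--\eqref{star3} and of $\|\epsilon(k)\|_2$ in \eqref{important}, rely solely on the minimality of the Potts updates \eqref{admm_sol_x}, \eqref{admm_sol_y} together with \eqref{b1}, \eqref{b2} and \eqref{eta}; they use no property of $F$ whatsoever. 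Hence these conclusions -- in particular $q_1^{(k)}, q_2^{(k)} \to 0$ and the geometric decay of $\|\epsilon(k)\|_2$ -- carry over unchanged. What remains is convergence of $\{u^{(k)}\}_k$, for which the convexity-based argument built on Fermat's rule and the subdifferential growth bound \eqref{subdiff_prop} leading to \eqref{eq:proof_eq1} is no longer available.

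For this I would exploit instead that $u^{(k+1)}$ is a \emph{global} minimizer of the $u$-subproblem \eqref{admm_sol_u_gen}. Completing the square in its quadratic part through the identity $\|u-a\|_2^2 + \|u-c\|_2^2 = 2\|u - z\|_2^2 + \frac12\|a-c\|_2^2$ with midpoint $z^{(k)} := \frac12\big(v^{(k)} - q_1^{(k)} + w^{(k)} - q_2^{(k)}\big)$, the objective in \eqref{admm_sol_u_gen} equals $F(u) + \eta^{(k)}\|u - z^{(k)}\|_2^2$ up to an additive constant independent of $u$. Comparing the value at the minimizer $u^{(k+1)}$ with the value at $u^{(k)}$ then yields
\begin{equation}
\eta^{(k)}\|u^{(k+1)} - z^{(k)}\|_2^2 \le \big(F(u^{(k)}) - F(u^{(k+1)})\big) + \eta^{(k)}\|u^{(k)} - z^{(k)}\|_2^2.
\end{equation}
Both iterates lie in $\dom F$ (the minimized objective is finite, by induction from the assumed existence of a global minimizer), so by boundedness of $F$ on its domain the first bracket is bounded by a constant $C$. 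A direct computation gives $z^{(k)} - u^{(k)} = \tfrac12\epsilon(k)$, so dividing by $\eta^{(k)}$ leaves
\begin{equation}
\|u^{(k+1)} - z^{(k)}\|_2^2 \le \frac{C}{\eta^{(k)}} + \tfrac14 \|\epsilon(k)\|_2^2.
\end{equation}

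Since $\eta^{(k)} = \eta^{(0)}\sigma^k$ with $\sigma > 1$ and $\|\epsilon(k)\|_2$ decays geometrically by \eqref{important}, the right-hand side decays geometrically, whence $\|u^{(k+1)} - z^{(k)}\|_2 \le C\sigma^{-k/2}$. Combining this with $\|z^{(k)} - u^{(k)}\|_2 = \tfrac12\|\epsilon(k)\|_2 \le C\sigma^{-k/2}$ via the triangle inequality produces geometric decay of the increments $\|u^{(k+1)} - u^{(k)}\|_2$. Being summable, these make $\{u^{(k)}\}_k$ a Cauchy sequence, so $u^{(k)} \to \hat u$ for some $\hat u$; and since $q_1^{(k)}, q_2^{(k)} \to 0$, relations \eqref{b1}, \eqref{b2} force $v^{(k)}, w^{(k)} \to \hat u$ as well, giving $\hat u = \hat v = \hat w$.

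The main obstacle is precisely the loss of convexity: I cannot write a first-order optimality condition for $u^{(k+1)}$ and bound a subgradient as in the passage from \eqref{eq:proof_eq1} onward. The resolution is to trade the subdifferential growth hypothesis \eqref{subdiff_prop} for the global-minimizer assumption, turning the one-step descent inequality into an $O(1/\eta^{(k)})$ bound on $\|u^{(k+1)} - z^{(k)}\|_2^2$. A second, easily overlooked subtlety is that mere convergence $\|u^{(k+1)} - u^{(k)}\|_2 \to 0$ would not imply convergence of the sequence; it is essential that the bound be \emph{geometric}, which is exactly what boundedness of $F$ combined with the geometric growth of $\eta^{(k)}$ provides.
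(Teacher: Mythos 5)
Your proposal is correct and follows essentially the same route as the paper: both reuse the $F$-independent first half of the proof of Theorem \ref{thm1} to get $q_1^{(k)},q_2^{(k)}\to 0$ and geometric decay of $\|\epsilon(k)\|_2$, and then compare the value of the subproblem \eqref{admm_sol_u_gen} at its global minimizer $u^{(k+1)}$ with its value at $u^{(k)}$, using boundedness of $F$ and the geometric growth of $\eta^{(k)}$ to make the increments summable. The only difference is cosmetic but mildly advantageous: by completing the square around the midpoint $z^{(k)}$ you eliminate the cross term $\langle\epsilon(k),u\rangle$, which lets you skip the paper's intermediate step of first arguing that $\|u^{(k+1)}-u^{(k)}\|_2$ is bounded before extracting the geometric rate.
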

%
	\begin{proof}
		As in the proof of Theorem \ref{thm1} we can show that \eqref{important} holds true for
$\epsilon(k) := v^{(k)}-u^{(k)}-q_1^{(k)} + w^{(k)}-u^{(k)}-q_2^{(k)}$.
		The quadratic term in \eqref{admm_sol_u_gen} can be rewritten as
		\begin{align}
		  \|u-v^{(k)}+q_1^{(k)}\|_2^2 + \|u-w^{(k)}+q_2^{(k)}\|_2^2  	
									&= 2 \langle u,u \rangle + 2 \langle u, q_1^{(k)} - v^{(k)} + q_2^{(k)} - w^{(k)} \rangle + C \\
									&= 2 \| u-u^{(k)} \|_2^2 - 2 \langle u, \epsilon(k) \rangle + C.
		\end{align}
		Thus, the first step of Algorithm \ref{A1} is equivalent to
		\begin{align}
		  u^{(k+1)} \in \argmin_{u} \Big\{ F(u) + \eta^{(k)} \|u-u^{(k)}\|_2^2 - \eta^{(k)} \langle \epsilon(k) , u  \rangle \Big\}.
		\end{align}
		This implies
		\begin{align}
		   F(u^{(k+1)}) + \eta^{(k)} \|u^{(k+1)}-u^{(k)}\|_2^2 - \eta^{(k)} \langle \epsilon(k) , u^{(k+1)}  \rangle &\le F(u^{(k)}) - \eta^{(k)} \langle \epsilon(k) , u^{(k)}  \rangle
                \end{align}
 and further
\begin{align}
		  \|u^{(k+1)}-u^{(k)}\|_2^2 &\le \frac{F(u^{(k)}) - F(u^{(k+1)})}{\eta^{(k)}} - \langle \epsilon(k) , u^{(k)} - u^{(k+1)}  \rangle.
		\end{align}
		Using the boundedness of $f$ and the Cauchy-Schwarz inequality leads to
		\begin{align}
		  \|u^{(k+1)}-u^{(k)}\|_2^2 &\le \frac{C}{\eta^{(k)}} + \| \epsilon(k) \|_2 \| u^{(k)} - u^{(k+1)}\|_2.
		\end{align}
		Since $\epsilon(k) \rightarrow 0$ as $k \rightarrow \infty$, we conclude that $\| u^{(k)} - u^{(k+1)}\|_2$ is bounded so that
				\begin{align}
		  \|u^{(k+1)}-u^{(k)}\|_2^2 				      &\le \frac{C}{\eta^{(k)}} + C \| \epsilon(k) \|_2.
		\end{align}
		Thus, $\| u^{(k)} - u^{(k+1)}\|_2$ is decreasing exponentially and $\{ u^{(k)} \}_k$ converges to some $\hat u$ as $k \rightarrow \infty$.
			\end{proof}
%
\section{Numerical Results} \label{sec:experiments}
%
In this section we present numerical results obtained
by our partitioning approaches.
The test images for the disparity and the optical flow problems were taken from
\begin{itemize}
\item
http://vision.middlebury.edu/stereo/ \cite{SP07,SS02,SS03}, and
\item
http://vision.middlebury.edu/flow/ \cite{BSLRBS11},
\end{itemize}
respectively.
All examples were executed on a computer with an Intel Core i7-870 Processor (8M Cache, 2.93 GHz)
and 8 GB physical memory, 64 Bit Linux.

We compare our direct partitioning methods \eqref{e_disp} and \eqref{e_flow} via Algorithm \ref{A1}
with a two-stage approach consisting of  i) disparity, resp.\ optical flow estimation, and
ii) partitioning of the estimated values. More precisely the two stage algorithm performs as follows:
\begin{itemize}
\item[i)]
In the first step, the disparity is estimated using the TV regularized model
\begin{align}\label{ex:disparity_tv}
	\min_{u_1 \in S_{Box}} & \Big\{  \frac{1}{2}\| A_1 u_1 - b_1\|_2^2 + \iota_{S_{Box}} (u_1) +
	\alpha_1 \| \, |\nabla u_1| \, \|_{1} \Big\}
\end{align}
with $A_1$ and $b_1$ defined by \eqref{a1} and \eqref{b1}, respectively.
Here $|\nabla u_1|$ stands for the discrete version of
$\left( \left( \frac{\partial u_1}{\partial x}(x,y)\right)^2 + \left( \frac{\partial u_1}{\partial y}(x,y)\right)^2\right)^\frac12$,
i.e., we use the isotropic (``rotationally invariant'') TV version.
Such model was proposed for the disparity estimation in \cite{CEFPP12} and can be found with e.g., shearlet
regularized $\ell_1$ norm in \cite{Fit13}.
For estimating the optical flow  we minimize
\begin{equation}\label{ex:flow_tv}
	\min_{u}  \Big\{  \frac{1}{2} \| A u - b\|_2^2 +
	\alpha_1 	\| \sqrt{|\nabla u_1|^2 + |\nabla u_2|^2} \|_1 \Big\},
\end{equation}
with $A$ and $b$ defined by \eqref{only_a} and \eqref{only_b}, respectively.
The global minimizers of the convex functionals \eqref{ex:disparity_tv} and \eqref{ex:flow_tv}
were computed via the primal-dual hybrid gradient method (PDHG) proposed in \cite{CP11,PCCB09}.
Clearly, one could use
other iterative first order (primal-dual) methods, see, e.g., \cite{CP10}.
\item [ii)]
In the second step the estimated disparity, resp.\ optical flow is partitioned by the method in \cite{SW14}
which minimizes, e.g., for the disparity the functional
\begin{align}
	\min_{u_1} \Big\{ \frac{1}{2} \|u_1 - u_{1,est}\|_2^2 + \alpha_2 (\| \nabla_1 u_1 \|_0 + \| \nabla_2 u_1 \|_0) \Big\},
\end{align}
where $ u_{1,est}$ is the disparity estimated in the first step.
For the approximation of a minimizer we use the software package Pottslab  http://pottslab.de
with default parameters.
Note that by introducing weights $w$ in the Potts prior the functional can be made
more isotropic which leads to a better ``rotation invariance'', see \cite{SW14}.
\end{itemize}

Next we comment on the direct partitioning implementation.
Our partitioning models \eqref{e_disp} and \eqref{e_flow} are based
on the knowledge of initial values $\bar u_1$ and $\bar u$ for the disparity, resp., the optical flow.
Here we use a simple block matching based algorithm, see \cite{CEFPP12}.
This method consists basically of a search within a given range.
For each pixel in the first image we compare
its surrounding block with surrounding blocks of pixels in the search range of the second image.
The chosen block size is $7 \times 7$. 
As a similarity measure we use the normalized cross correlation \cite{TLC03}.
Finally we apply a median filter to the initial guess to reduce the influence of outliers.
Since $(i - \bar u_1,j)$, resp.\  $(i,j) - \bar u(i,j)$
are the grid coordinates of the pixel in the second image corresponding to pixel $(i,j)$ in the first image,
we see that $f_2(i- \bar u_1,j)$, resp.\ $f_2((i,j) - \bar u)$ are really well defined grid functions.
As parameters in Algorithm \ref{A1} we choose $\eta^{(0)} = 0.01$ and $\sigma = 1.05$.
The algorithm is initialized with $v^{(0)} = w^{(0)}=\bar u_1$ for the disparity partitioning
and $v^{(0)} = w^{(0)}=\bar u$ for the flow partitioning; further $q_i^{(0)}$, $i=1,2$ are zero matrices.
We show the results after 100 iterations where no differences to subsequently iterated images can be seen.
\\

We start with the disparity partitioning results.
Figure \ref{fig:venus} shows the results for the image ``Venus''. The true disparity contains
horizontal and vertical structures so that our non isotropic direct approach fits fine.
It can compete with the more expansive two stage method. The main differences appear due
to the more or less isotropy of the models.
%
		\begin{figure}
		\centering
			\includegraphics[width=0.22 \textwidth]{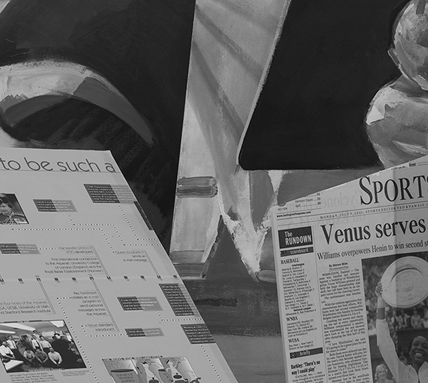}
			\hspace{0.2cm}
			\includegraphics[width=0.22 \textwidth]{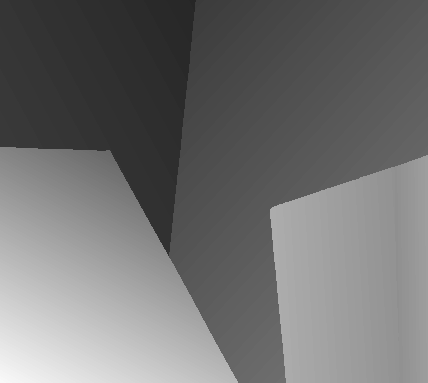}
			\hspace{0.2cm}
			\includegraphics[width=0.22 \textwidth]{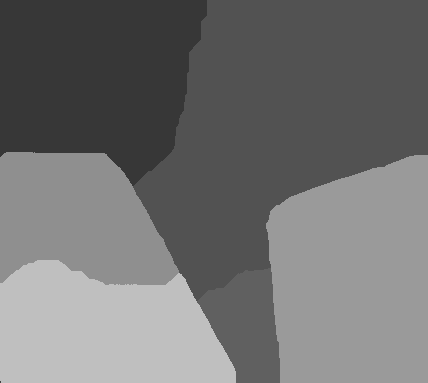}
			\hspace{0.2cm}
			\includegraphics[width=0.22 \textwidth]{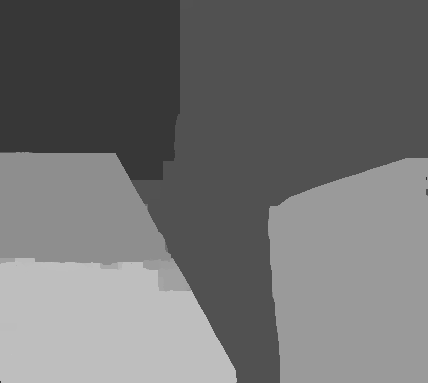}
			\caption{ \label{fig:venus}
			Results for the test images ``Venus''.
			Left to right: original left image, ground truth,
			partitioned disparity
			using the two stage algorithm  ($\alpha_1 = 0.005$, $\alpha_2 = 300$),
			partitioned	disparity using the direct algorithm ($\lambda = 2.5$).
			}
			\end{figure}
%

Figs. \ref{fig:cones} and \ref{fig:dolls} show that our direct partitioning algorithm can qualitatively
compete with the two stage algorithm.

%
		\begin{figure}[ht]
		\centering
			\includegraphics[width=0.22 \textwidth]{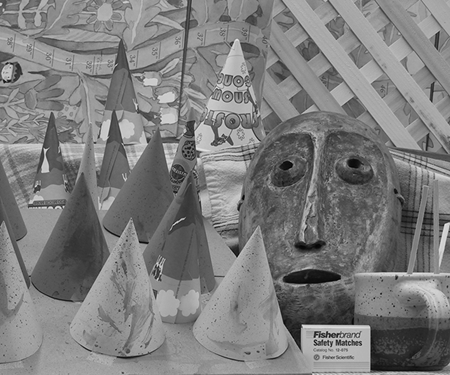}
			\hspace{0.2cm}
			\includegraphics[width=0.22 \textwidth]{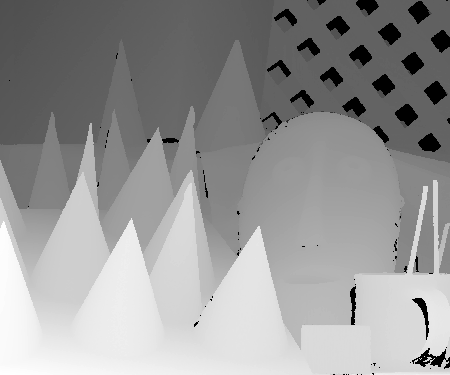}
			\hspace{0.2cm}
			\includegraphics[width=0.22 \textwidth]{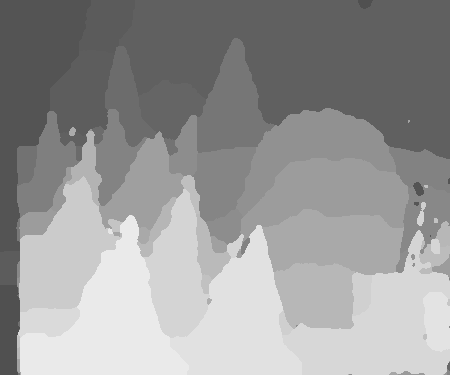}
			\hspace{0.2cm}
			\includegraphics[width=0.22 \textwidth]{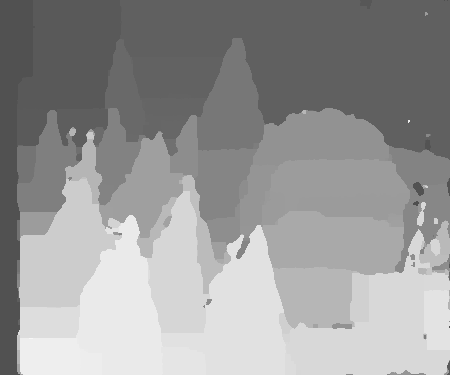}
			\caption{ \label{fig:cones}
			Result for the images ``Cones''. Left to right: original left image, ground truth,
			partitioned disparity using the two stage algorithm ($\alpha_1 = 0.005$, $\alpha_2 = 50$),
			partitioned	disparity using the direct algorithm ($\lambda = 0.5$).}
					\end{figure}
	%
	
		\begin{figure}[ht]
		\centering
			\includegraphics[width=0.22 \textwidth]{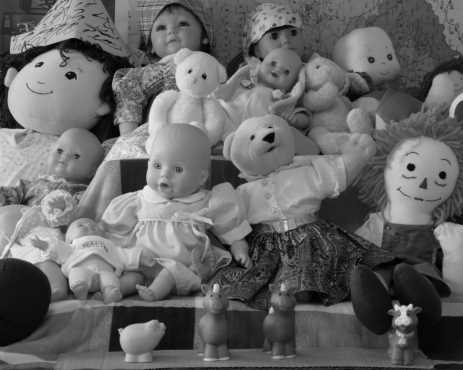}
			\hspace{0.2cm}
			\includegraphics[width=0.22 \textwidth]{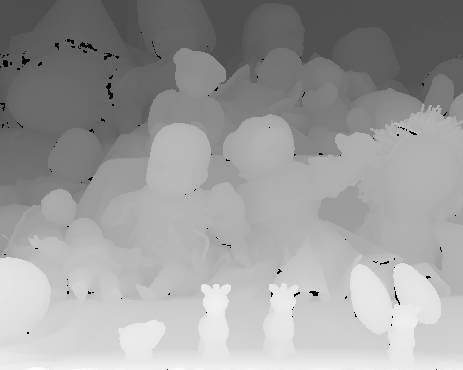}
			\vspace{0.2cm}
			\includegraphics[width=0.22 \textwidth]{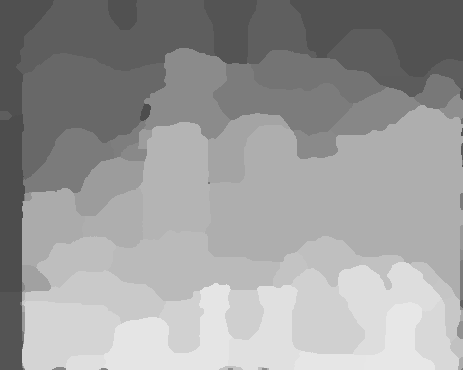}
			\hspace{0.2cm}
			\includegraphics[width=0.22 \textwidth]{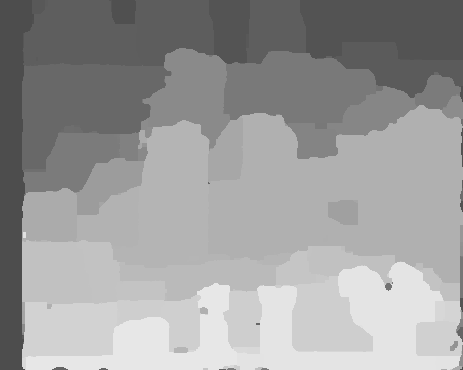}
			\caption{\label{fig:dolls}
			Result for the ``Dolls'' images.
			Left to right: original left image, ground truth,
			partitioned disparity using the two stage algorithm ($\alpha_1 = 0.01$, $\alpha_2 = 80$),
			partitioned	disparity using the direct algorithm ($\lambda = 1.5$).}
			\end{figure}
		%
		
Next we show our results for the optical flow partitioning.
The flow vectors are color coded with
color $\simeq$ direction, brightness $\simeq$ magnitude).
The ground truth flow field in the first example ``Wooden'' in Fig. \ref{fig:wooden}
prefers horizontal and vertical directions. As in the first disparity example our algorithm show good results.
In Fig. \ref{fig:rubber} and Fig. \ref{fig:hydra} we see that our direct method can compete
with the more involved two stage approach.
The main differences appear again due to the more isotropic approach in the two stage model.
Especially in Fig. \ref{fig:rubber} one can see that the flow field of the rotating wheel is partitioned into 
rectangular instead of annular segments by our direct method. In the same figure, we show a result where
we have estimated the optical flow in Step 1 by the more sophisticated model in \cite{BM11}, 
for the program code see  http://lmb.informatik.uni-freiburg.de/resources/software.php. Step 2 was the same.
The result is only slightly different from those obtained by the previously described two stage algorithm.

%
		\begin{figure}[ht]
		\centering
			\includegraphics[width=0.22 \textwidth]{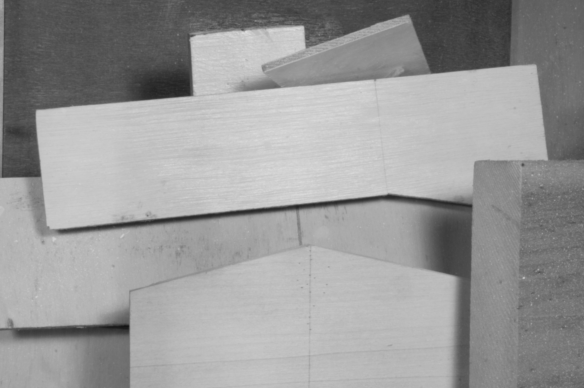}
			\hspace{0.2cm}
			\includegraphics[width=0.22 \textwidth]{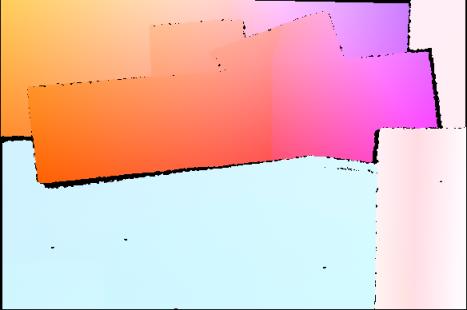}
			\hspace{0.2cm}
			\includegraphics[width=0.22 \textwidth]{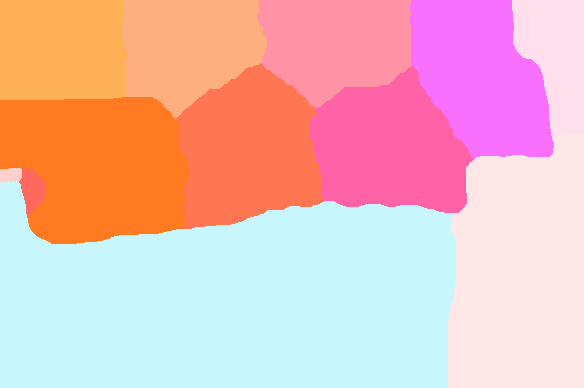}
			\hspace{0.2cm}
			\includegraphics[width=0.22 \textwidth]{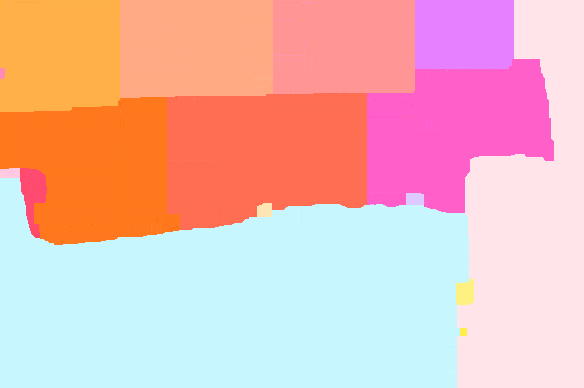}
			\caption{\label{fig:wooden}
			Result for the ``Wooden'' images, Left to right:
			first test image, ground truth, partitioned optical flow using the two stage algorithm ($\alpha_1 = 0.01$, $\alpha_2 = 150$),   
partitioned optical flow by the direct algorithm ($\lambda = 0.5$).
			}
				\end{figure}
%
		\begin{figure} \centering
			\includegraphics[width=0.3 \textwidth]{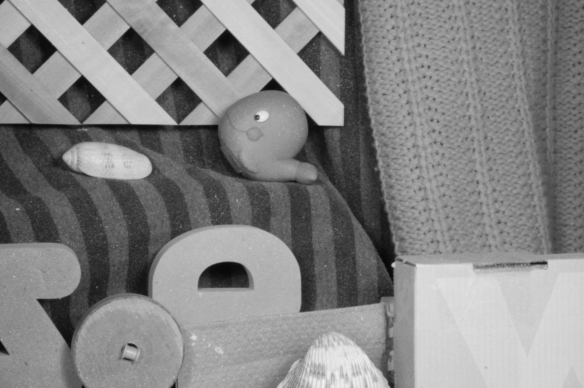}
			\hspace{0.2cm}
			\includegraphics[width=0.3 \textwidth]{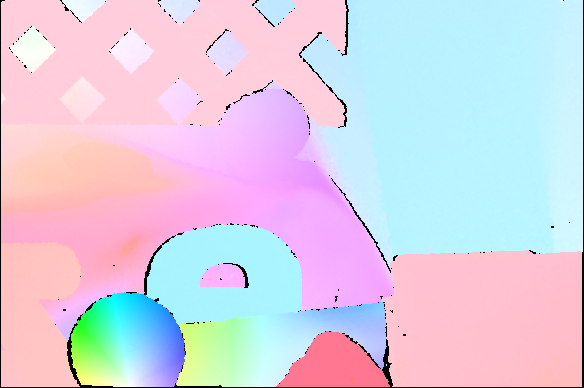}
			\hspace{0.2cm}
			\includegraphics[width=0.3 \textwidth]{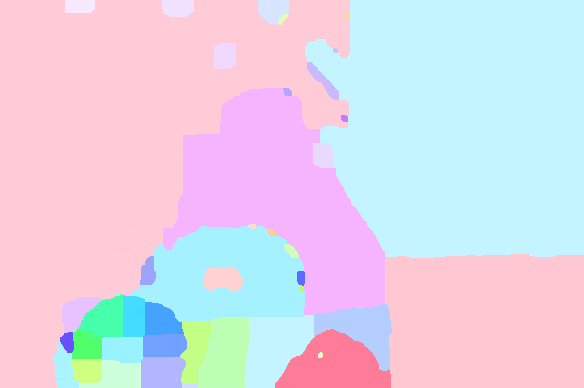}
			\\[2ex]			
			\includegraphics[width=0.3 \textwidth]{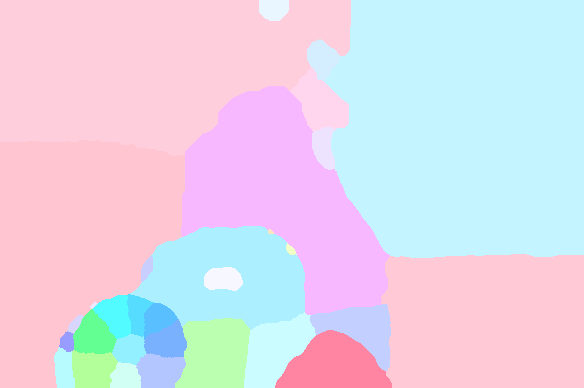}
			\hspace{0.2cm}
			\includegraphics[width=0.3 \textwidth]{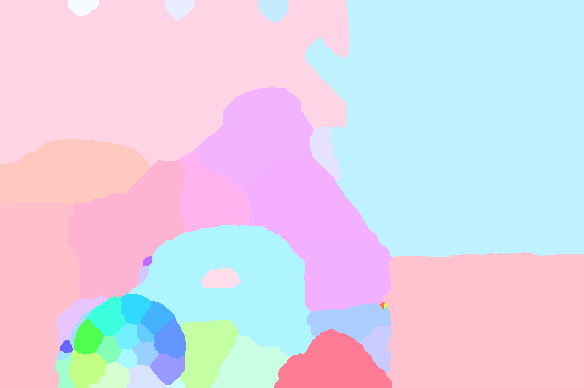}
			\caption{
                        \label{fig:rubber}
			Result for the test images ``RubberWhale''.
			Top: first test image,  ground truth, partitioned optical flow by the direct algorithm ($\lambda = 0.05$) .
			Bottom: partitioned optical flow by the two stage algorithm. 
                        Left: Two stage algorithm ($\alpha_1 = 0.005$, $\alpha_2 = 7$), 
                        Right: Two stage algorithm but with Step 1 computed by the model in \cite{BM11} ($\alpha_2 = 7$). }
			\end{figure}
		
		\begin{figure} \centering
			\includegraphics[width=0.22 \textwidth]{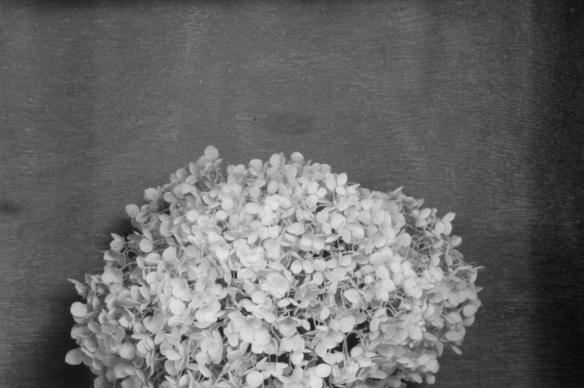}
			\hspace{0.2cm}
			\includegraphics[width=0.22 \textwidth]{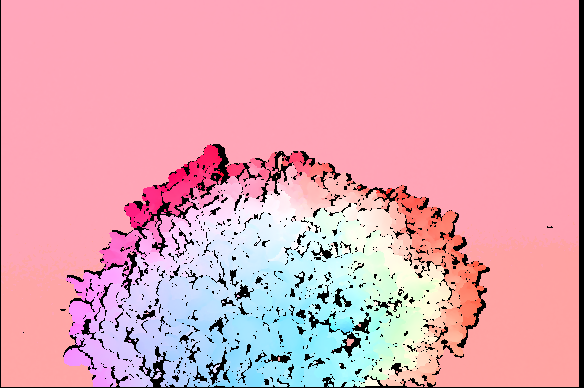}
			\hspace{0.2cm}
			\includegraphics[width=0.22 \textwidth]{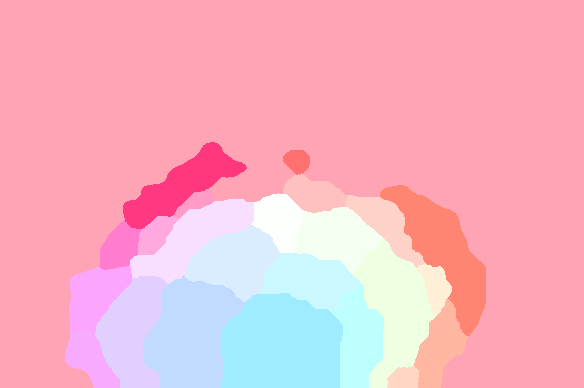}
			\hspace{0.2cm}
			\includegraphics[width=0.22 \textwidth]{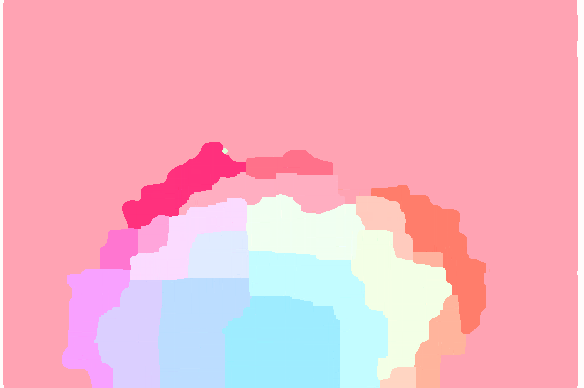}
			\caption{\label{fig:hydra}
			Result for the images ``Hydrangea''.
			Left to right:  ground truth, partitioned optical flow by the two stage algorithm
			($\alpha_1 = 0.01$, $\alpha_2 = 35$),  partitioned optical flow by the direct algorithm ($\lambda = 0.15$).}
				\end{figure}

\section{Conclusions} \label{sec:conclusions}
%
In this paper we have proposed a new method
for disparity and optical flow partitioning based on a Potts regularized
variational model together with an ADMM like algorithm.
In case of the optical flow it is adapted to vector-valued data.
In this paper, we have only shown the {\it basic approach} and further refinements are planned in the future.
So we intend to incorporate more sophisticated data fidelity terms.
In particular illumination changes should be handled.
We will make the model more ``rotationally invariant''.
The simple introduction of weights and other differences as in \cite{SW14} and in several graph cut approaches
is one possibility.
The crucial part for the running time of the proposed direct algorithm is the univariate Potts minimization.
However, since the single problems are independent of each other, they could be solved in parallel.
Such parallel implementation is another point of future activities.
Further we want to incorporate multiple frames instead of just two of them in our model.
From the theoretical point of view,
to establish just the convergence of an algorithm to a local minimizer
seems not to be enlightening since certain constant images
are contained in the set of local minimizers and we are clearly not looking for them.
However, a better understanding of strict (local) minimizers and the choice of initial values for the algorithm
is interesting.
\\

%
{\bf Acknowledgement:}
The work of J. H. Fitschen has been supported by Deutsche Forschungsgemeinschaft (DFG) within the Graduate School 1932.
Some parts of the paper have been written during a visit M. Nikolova at this Graduate School.
Many thanks to M. El-Gheche (University Paris Est) for fruitful discussions on disparity estimation.
%


\bibliographystyle{abbrv}

\begin{thebibliography}{10}

\bibitem{Ana89}
P.~Anandan.
\newblock A computational framework and an algorithm for the measurement of
  visual motion.
\newblock {\em International Journal of Computer Vision}, 2(3):283--310, 1989.

\bibitem{ADK99}
G.~Aubert, R.~Deriche, and P.~Kornprobst.
\newblock Computing optical flow via variational techniques.
\newblock {\em SIAM Journal on Applied Mathematics}, 60(1):156--182, 1999.

\bibitem{Aus00}
A.~Auslender.
\newblock Existence of optimal solutions and duality results under weak
  conditions.
\newblock {\em Mathematical Programming}, 88(1):45--59, 2000.

\bibitem{AT03}
A.~Auslender and M.~Teboulle.
\newblock {\em Asymptotic Cones and Functions in Optimization and Variational
  Inequalities}.
\newblock Springer, 2003.

\bibitem{BBGT98}
C.~Baiocchi, G.~Buttazzo, F.~Gastaldi, and F.~Tomarelli.
\newblock General existence theorems for unilateral problems in continuum
  mechanics.
\newblock {\em Archive for Rational Mechanics and Analysis}, 100(2):149--189,
  1988.

\bibitem{BSLRBS11}
S.~Baker, D.~Scharstein, J.~Lewis, S.~Roth, M.~Black, and R.~Szeliski.
\newblock A database and evaluation methodology for optical flow.
\newblock {\em International Journal of Computer Vision}, 92(1):1--31, 2011.

\bibitem{BPS14}
F.~Becker, S.~Petra, and C.~Schn\"{o}rr.
\newblock Optical flow.
\newblock In O.~Scherzer, editor, {\em Handbook of Mathematical Methods in
  Imaging}. Springer, 2nd edition, 2014.

\bibitem{BNO03}
D.~P. Bertsekas, A.~Nedi\'c, and E.~Ozdaglar.
\newblock {\em Convex Analysis and Optimization}.
\newblock Athena Scientific, 2003.

\bibitem{Bes86}
J.~E. Besag.
\newblock On the statistical analysis of dirty pictures (with discussion).
\newblock {\em Journal of the Royal Statistical Society B}, 48:259--302, 1986.

\bibitem{BPCPE10}
S.~Boyd, N.~Parikh, E.~Chu, B.~Peleato, and J.~Eckstein.
\newblock Distributed optimization and statistical learning via the alternating
  direction method of multipliers.
\newblock {\em Foundations and Trends in Machine Learning}, 3:1--122, 2010.

\bibitem{BVZ01}
Y.~Boykov, O.~Veksler, and R.~Zabih.
\newblock Fast approximate energy minimization via graph cuts.
\newblock {\em IEEE Transactions on Pattern Analysis and Machine Intelligence},
  23:1222--1239, 2011.

\bibitem{BBPW04}
T.~Brox, A.~Bruhn, N.~Papenberg, and J.~Weickert.
\newblock High accuracy optical flow estimation based on a theory for warping.
\newblock In T.~Pajdla and J.~Matas, editors, {\em Computer Vision - ECCV
  2004}, volume 3024 of {\em Lecture Notes in Computer Science}, pages 25--36.
  Springer Berlin Heidelberg, 2004.

\bibitem{BM11}
T.~Brox and J.~Malik.
\newblock {Large Displacement Optical Flow: Descriptor Matching in Variational
  Motion Estimation}.
\newblock {\em IEEE Transactions on Pattern Analysis and Machine Intelligence},
  33(3):500--513, 2011.

\bibitem{Cha95}
A.~Chambolle.
\newblock Image segmentation by variational methods: {M}umford-{S}hah
  functional and discrete approximations.
\newblock {\em SIAM Journal on Applied Mathematics}, 55:827--863, 1995.

\bibitem{CP11}
A.~Chambolle and T.~Pock.
\newblock {A First-Order Primal-Dual Algorithm for Convex Problems with
  Applications to Imaging}.
\newblock {\em Journal of Mathematical Imaging and Vision}, 40(1):120--145,
  2011.

\bibitem{CEFPP12}
C.~Chaux, M.~EI-Gheche, J.~Farah, J.~Pesquet, and B.~Popescu.
\newblock A parallel proximal splitting method for disparity estimation from
  multicomponent images under illumination variation.
\newblock {\em Journal of Mathematical Imaging and Vision}, 47(3):1--12, 2012.

\bibitem{CJPT13}
E.~Chouzenoux, A.~Jezierska, J.-C. Pesquet, and H.~Talbot.
\newblock A majorize-minimize subspace approach for 
  regularization.
\newblock {\em SIAM Journal on Imaging Sciences}, 6(1):563--591, 2013.

\bibitem{CP10}
P.~L. Combettes and J.-C. Pesquet.
\newblock Proximal splitting methods in signal processing.
\newblock In H.~H. Bauschke, R.~Burachik, P.~L. Combettes, V.~Elser, D.~R.
  Luke, and H.~Wolkowicz, editors, {\em Fixed-Point Algorithms for Inverse
  Problems in Science and Engineering}, pages 185--212. Springer-Verlag, New
  York, 2010.

\bibitem{CRH95}
I.~Cox, S.~Roy, and S.~Hingorani.
\newblock Dynamic histogram warping of image pairs for constant image
  brightness.
\newblock In {\em Proc. Int. Conf. Image Process, Washington, DC}, pages
  366--369, Oct. 1995.

\bibitem{CTKC11}
D.~Cremers, P.~Thomas, K.~Kolev, and A.~Chambolle.
\newblock Convex relaxation techniques for segmentation, stereo and multiview
  reconstruction.
\newblock In {\em Markov Random Fields for Vision and Image Processing, A.
  Blake, P. Kohli, and C. Rother, Eds. The MIT Press, Boston}, 2011.

\bibitem{DMA97}
G.~Davis, S.~Mallat, and M.~Avellaneda.
\newblock Adaptive greedy approximations.
\newblock {\em Constructive Approximation}, 13(1):57--98, 1997.

\bibitem{Ded77}
J.-P. Dedieu.
\newblock C{\^o}nes asymptotes d'un ensemble non convexe. application \`a
  l'optimisation.
\newblock {\em Compte-rendus de l' Acad\'emie des Sciences}, 287:91--103, 1977.

\bibitem{DHHM12}
P.~D\'erian, P.~H\'eas, C.~Herzet, and E.~M\'emin.
\newblock Wavelet-based fluid motion estimation.
\newblock In A.~Bruckstein, B.~Haar~Romeny, A.~Bronstein, and M.~Bronstein,
  editors, {\em Scale Space and Variational Methods in Computer Vision}, volume
  6667 of {\em Lecture Notes in Computer Science}, pages 737--748. Springer
  Berlin Heidelberg, 2012.

\bibitem{DKA96}
R.~Deriche, P.~Kornprobst, and G.~Aubert.
\newblock Optical-flow estimation while preserving its discontinuities: A
  variational approach.
\newblock In {\em Li, S., Mital, D., Teoh, E., Wang, H. (eds.) Recent
  Developments in Computer Vision. Lecture Notes in Computer Science, Springer,
  Berlin}, pages 69--80, 1996.

\bibitem{Fit13}
J.~H. Fitschen.
\newblock {\em Proximal splitting methods for the disparity estimation under
  illumunation variation}.
\newblock Bachelor Thesis, University of Kaiserslautern, 2013.

\bibitem{FR14}
M.~Fornasier and H.~Rauhut.
\newblock Compressive sensing.
\newblock In O.~Scherzer, editor, {\em Handbook of Mathematical Methods in
  Imaging}. Springer, 2nd edition, 2014.

\bibitem{FKLW08}
F.~Friedrich, A.~Kempe, V.~Liebscher, and G.~Winkler.
\newblock Complexity penalized m-estimation.
\newblock {\em Journal of Computational and Graphical Statistics}, pages
  201--224, 2008.

\bibitem{Gab83}
D.~Gabay.
\newblock Applications of the method of multipliers to variational
  inequalities.
\newblock In M.~Fortin and R.~Glowinski, editors, {\em Augmented {L}agrangian
  Methods: Applications to the Solution of Boundary Value Problems},
  chapter~IX, pages 299--340. North-Holland, Amsterdam, 1983.

\bibitem{GLSU13}
A.~Geiger, P.~Lenz, C.~Stiller, and R.~Urtasun.
\newblock Vision meets robotics: The kitti dataset.
\newblock {\em The International Journal of Robotics Research},
  32(11):1231--1237, 2013.

\bibitem{GG84}
S.~Geman and D.~Geman.
\newblock Stochastic relaxation, gibbs distributions, and the bayesian
  restoration of images.
\newblock {\em IEEE Transactions on Pattern Analysis and Machine Intelligence},
  6(6):721--741, Nov 1984.

\bibitem{HDW13}
D.~Hafner, O.~Demetz, J.~Weickert, and M.~Rei{\ss}el.
\newblock Is the census transform good for robust optic flow computation?
\newblock In A.~Kuijpers, K.~Bredies, T.~Pock, and H.~Bischof, editors, {\em
  Scale Space and Variational Methods in Computer Vision}, volume 7893 of {\em
  Lecture Notes in Computer Science}, pages 210--221. Springer Berlin
  Heidelberg, 2013.

\bibitem{HPP12}
S.~Hiltunen, J.~C. Pesquet, and B.~Pesquet-Popescu.
\newblock Comparison of two proximal splitting algorithms for solving
  multilabel disparity estimation problems.
\newblock In {\em European Signal and Image Processing Conference (EUSIPCO
  2012), Bucharest, Romania}, pages 1134--1138, 2012.

\bibitem{HSSW02}
W.~Hinterberger, O.~Scherzer, C.~Schn\"orr, and J.~Weickert.
\newblock Analysis of optical flow models in the framework of calculus of
  variations.
\newblock {\em Numerical Functional Analysis and Optimization}, 23(1/2):69--89,
  2002.

\bibitem{HS81}
B.~K. Horn and B.~G. Schunck.
\newblock {Determining optical flow}.
\newblock {\em Artificial Intelligence}, 17(1-3):185--203, 1981.

\bibitem{KSK06}
A.~Klaus, M.~Sormann, and K.~Karner.
\newblock Segment-based stereo matching using belief propagation and a
  self-adapting dissimilarity measure.
\newblock In {\em Proc. Int. Conf. Pattern Recognition, Hong Kong}, volume~3,
  pages 15--18, 2006.

\bibitem{KZ01}
V.~Kolmogorov and R.~Zabih.
\newblock Computing visual correspondence with occlusions using graph cuts.
\newblock In {\em Proceedings of the Eighth IEEE International Conference on
  Computer Vision}, volume~2, pages 508--515, 2001.

\bibitem{Lec89}
Y.~Leclerc.
\newblock {Constructing simple stable descriptions for image partitioning}.
\newblock {\em International Journal of Computer Vision}, 3:120--145, 1989.

\bibitem{Lu13}
Z.~Lu.
\newblock Iterative hard thresholding methods for $l_0$ regularized convex cone
  programming.
\newblock {\em Mathematical Programming}, DOI 10.1007/s10107-013-0714-4, 2013.

\bibitem{MPP06}
W.~Miled, J.~Pesquet, and M.~Parent.
\newblock Disparity map estimation using a total variation bound.
\newblock In {\em The 3rd Canadian Conf. on Computer and Robot Vision, Quebec,
  Canada}, pages 48--55, 2006.

\bibitem{MPP09}
W.~Miled, J.~Pesquet, and M.~Parent.
\newblock A convex optimization approach for depth estimation under
  illumination variation.
\newblock {\em IEEE Transactions on Image Processing}, 18(4):813--830, 2009.

\bibitem{MLD11}
D.~Min, J.~Lu, and M.~Do.
\newblock A revisit to cost aggregation in stereo matching: How far can we
  reduce its computational redundancy?
\newblock In {\em Proceedings of the IEEE International Conference on Computer
  Vision}, pages 1567--1574, 2011.

\bibitem{MWW10}
D.~Mukherjee, G.~Wang, and Q.~Wu.
\newblock Stereo matching algorithm based on curvelet decomposition and modifed
  support weights.
\newblock In {\em Proceedings of the IEEE International Conference on Acoustics
  Speech and Signal Processing}, pages 758--761, 2010.

\bibitem{MS85}
D.~Mumford and J.~Shah.
\newblock Boundary detection by minimizing functionals.
\newblock {\em IEEE Conference on Computer Vision and Pattern Recognition},
  17:137--154, 1985.

\bibitem{Ni13}
M.~Nikolova.
\newblock Description of the minimizers of least squares regularized with
  $\ell_0$-norm. uniqueness of the global minimizer.
\newblock {\em SIAM Journal on Imaging Sciences}, 6(2):904--937, 2013.

\bibitem{PCCB09}
T.~Pock, A.~Chambolle, D.~Cremers, and H.~Bischof.
\newblock A convex relaxation approach for computing minimal partitions.
\newblock In {\em IEEE Conf. Computer Vision and Pattern Recognition}, pages
  810--817. 2009.

\bibitem{Pot52}
R.~B. Potts.
\newblock Some generalized order-disorder transformations.
\newblock {\em Proceedings of the Cambridge Philosophical Society},
  48:106--109, 1952.

\bibitem{RLM07}
M.~Robini, A.~Lachal, and I.~Magnin.
\newblock A stochastic continuation approach to piecewise constant
  reconstruction.
\newblock {\em IEEE Transactions on Image Processing}, 16(10):2576--2589, 2007.

\bibitem{RM10}
M.~Robini and I.~Magnin.
\newblock Optimization by stochastic continuation.
\newblock {\em SIAM Journal on Imaging Sciences}, 3(4):1096--1121, 2010.

\bibitem{RR13}
M.~Robini and P.~Reissman.
\newblock From simulated annealing to stochastic continuation: a new trend in
  combinatorial optimization.
\newblock {\em Journal of Global Optimization}, DOI 10.1007/s10898-012-9860-0,
  2013.

\bibitem{SP07}
D.~Scharstein and C.~Pal.
\newblock Learning conditional random fields for stereo.
\newblock In {\em IEEE Computer Society Conference on Computer Vision and
  Pattern Recognition (CVPR 2007)}, 2007.

\bibitem{SS02}
D.~Scharstein and R.~Szeliski.
\newblock A taxonomy and evaluation of dense two-frame stereo correspondence
  algorithms.
\newblock {\em International Journal of Computer Vision}, 42(1/2/3):7--42,
  2002.

\bibitem{SS03}
D.~Scharstein and R.~Szeliski.
\newblock High-accuracy stereo depth maps using structured light.
\newblock In {\em IEEE Computer Society Conference on Computer Vision and
  Pattern Recognition (CVPR 2003)}, volume~1, pages 195--202, 2003.

\bibitem{SW14}
M.~Storath and A.~Weinmann.
\newblock Fast partitioning of vector-valued images.
\newblock {\em Preprint GSF Muenich}, 2014.

\bibitem{SWD13}
M.~Storath, A.~Weinmann, and L.~Demaret.
\newblock Jump-sparse and sparse recovery using potts functionals.
\newblock {\em arXiv preprint arXiv:1304.4373}, 2013.

\bibitem{Tro06}
J.~Tropp.
\newblock Just relax: convex programming methods for identifying sparse signals
  in noise.
\newblock {\em Information Theory, IEEE Transactions on}, 52(3):1030--1051,
  March 2006.

\bibitem{TLC03}
D.~Tsai, C.~Lin, and J.~Chen.
\newblock The evaluation of normalized cross correlations for defect detection.
\newblock {\em Pattern Recognition Letters}, 24(15):2525--2535, 2003.

\bibitem{WY11}
L.~Wang and R.~Yang.
\newblock Global stereo matching leveraged by sparse ground control points.
\newblock In {\em Proceedings of the IEEE Conference on Computer Vision and
  Pattern Recognition}, pages 3033--3040, 2011.

\bibitem{WSD12}
A.~Weinmann, M.~Storath, and L.~Demaret.
\newblock Reconstruction of piecewise constant signals by minimization of the
  l1-{P}otts functional.
\newblock {\em arXiv preprint arXiv:1207.4642}, 50, 2012.

\bibitem{WPB10}
M.~Werlberger, T.~Pock, and H.~Bischof.
\newblock Motion estimation with non-local total variation regularization.
\newblock In {\em Computer Vision and Pattern Recognition (CVPR), 2010 IEEE
  Conference on}, pages 2464--2471, June 2010.

\bibitem{WUPB12}
M.~Werlberger, M.~Unger, T.~Pock, and H.~Bischof.
\newblock Efficient minimization of the non-local {P}otts model.
\newblock In A.~Bruckstein, B.~Haar~Romeny, A.~Bronstein, and M.~Bronstein,
  editors, {\em Scale Space and Variational Methods in Computer Vision}, volume
  6667 of {\em Lecture Notes in Computer Science}, pages 314--325. Springer
  Berlin Heidelberg, 2012.

\bibitem{YWYWLN06}
Q.~Yang, L.~Wang, R.~Yang, S.~Wang, M.~Liao, and D.~Nister.
\newblock Realtime global stereo matching using hierarchical belief
  propagation.
\newblock In {\em Proc. British Machine Vision Conference, Edinburgh, UK},
  pages 989--998, 2006.

\bibitem{YSM07}
J.~Yuan, C.~Schn\"orr, and E.~M\'emin.
\newblock Discrete orthogonal decomposition and variational fluid flow
  estimation.
\newblock {\em Journal of Mathematical Imaging and Vision}, 28:67--80, 2007.

\bibitem{YSS07}
J.~Yuan, C.~Schn\"orr, and G.~Steidl.
\newblock Simultaneous higher order optical flow estimation and decomposition.
\newblock {\em SIAM Journal on Scientific Computing}, 29(6):2283--2304, 2007.

\bibitem{YSS09a}
J.~Yuan, C.~Schn\"orr, and G.~Steidl.
\newblock Convex {H}odge decomposition and regularization of image flows.
\newblock {\em Journal of Mathematical Imaging and Vision}, 33(2):169--177,
  2009.

\bibitem{ZW94}
R.~Zabih and J.~Woodfill.
\newblock Non-parametric local transforms for computing visual correspondence.
\newblock In {\em Proc. Eur. Conf. Comput. Vis, Stockholm, Sweden}, pages
  15--158, 1994.

\bibitem{ZDL13}
Y.~Zhang, B.~Dong, and Z.~Lu.
\newblock $l_0$ minimization for wavelet frame based image restoration.
\newblock {\em Mathematics of Computation}, 82:995--1015, 2013.

\end{thebibliography}


\end{document}